\newcommand{\Z}{ \mathbb Z}
\newcommand{\Q}{ \mathbb Q}
\newcommand{\C}{ \mathbb C}
\newcommand{\G}{ \Gamma}
\newtheorem{theorem}{Theorem}[section]
\newtheorem{corollary}[theorem]{Corollary}
\newtheorem{lemma}[theorem]{Lemma}
\newtheorem{remark}[theorem]{Remark}
\newcommand*\HYPERskip{&}
\newcommand*\pFq{
\begingroup
\catcode`\,\active
\def ,{\HYPERskip}%
\doHyper
}
\def\doHyper#1#2#3#4#5{%
\, _{#1}F_{#2}\left[\begin{matrix}#3 \smallskip \\  #4\end{matrix} \; ; \; #5\right]%
\endgroup
}
\author{Holly Swisher}
\address{Department of Mathematics, Oregon State University, 368 Kidder Hall, Corvallis, OR 97331, USA}
\email{swisherh@math.oregonstate.edu}
\date{\today}
\subjclass[2010]{33C20, 44A20}
\keywords{Ramanujan type supercongruences, hypergeometric series}
\begin{document}

\title{On the supercongruence conjectures of van Hamme}

\begin{abstract}
In 1997, van Hamme developed $p-$adic analogs, for primes $p$, of several series which relate hypergeometric series to values of the gamma function, originally studied by Ramanujan.  These analogs relate truncated sums of hypergeometric series to values of the $p-$adic gamma function, and are called Ramanujan type supercongruences.  In all, van Hamme conjectured 13 such formulas, three of which were proved by van Hamme himself, and five others have been proved recently using a wide range of methods.  Here, we explore four of the remaining five van Hamme supercongruences, revisit some of the proved ones, and provide some extensions.   
\end{abstract}

\maketitle

\section{Introduction}

In 1914, Ramanujan listed 17 infinite series representations of $1/\pi$, including for example
\[
\sum_{k= 0}^{\infty} (4k+1)(-1)^k \frac{(\frac 12)_k^3}{k!^3}= \frac 2 \pi =\frac{2}{\G\left(\frac12 \right)^2}.
\] 
Several of Ramanujan's formulas relate hypergeometric series to values of the gamma function. 

In the 1980's it was discovered that Ramanujan's formulas provided efficient means for calculating digits of $\pi$.  In 1987, J. and P. Borwein \cite{BorweinBorwein} proved all 17 of Ramanujan's identities, while D. and G. Chudnovsky \cite{ChudnovskyChudnovsky} derived additional series for $1/\pi$.  Digits of $\pi$ were calculated in both papers resulting in a new world record at the time by the Chudnovskys of $2, 260, 331, 336$ digits.  All of these Ramanujan type formulas for $1/\pi$ are related to elliptic curves with complex multiplication (CM). 

In 1997, van Hamme \cite{vanHamme} developed $p-$adic analogs, for primes $p$, of several Ramanujan type series.  Analogs of this type are called Ramanujan type supercongruences, and relate truncated sums of hypergeometric series to values of the $p-$adic gamma function.  In a recent paper \cite{CDLNS}, the author along with S. Chisholm, A. Deines, L. Long, and G. Nebe prove a general $p-$adic analog of Ramanujan type supercongruences modulo $p^2$ for suitable truncated hypergeometric series arising from CM elliptic curves.  Zudilin conjectured that the generic optimal strength in this setting should be modulo $p^3$.

In all, van Hamme conjectured 13 Ramanujan type supercongruences, which we list below in Table \ref{table}. 

We note that in the right column of Table \ref{table}, $S(m)$ denotes the corresponding sum from the left column truncated at $k=m$.  Furthermore, $a(n)$ in the last supercongruence denotes the $n$th Fourier coefficient of the eta-product 
\[
\eta(2z)^4\eta(4z)^4 = q\prod_{n\geq 1}(1-q^{2n})^4(1-q^{4n})^4= \sum_{n\geq 1} a(n)q^n,
\]
where $q=e^{2\pi iz}$.

\FloatBarrier

\scriptsize

\begin{table}
\caption{The van Hamme Conjectures} \label{table}
\begin{tabular}{llll}

 & {\bf Ramanujan Series} & &  {\bf Conjectures of van Hamme} \\ \medskip
 
(A.1) & $\sum_{k=0}^{\infty}(4k+1)(-1)^{k}\frac{\left( \frac{1}{2} \right)^5_k}{k!^5}=\frac{2}{\Gamma\big(\frac{3}{4}\big)^4}$ & (A.2) &
$S\Big(\frac{p-1}{2}\Big)\equiv\begin{cases}\frac{-p}{\Gamma_{p}\big(\frac{3}{4}\big)^4} \pmod{p^3}, &\text{if} \ \ p \equiv 1 \pmod{4}\\ 0 \ \ \ \ \ \ \ \ \ \ \pmod{p^3}, &\text{if} \ \ p \equiv 3 \pmod{4} \end{cases}$ \\ \medskip

(B.1) & $\sum_{k=0}^{\infty}(4k+1)(-1)^{k}\frac{\left( \frac{1}{2} \right)^3_k}{k!^3}=\frac{2}{\pi}=\frac{2}{\Gamma\big(\frac{1}{2}\big)^2}$ & (B.2) &
$S\Big(\frac{p-1}{2}\Big)\equiv \frac{-p}{\Gamma_{p}\big(\frac{1}{2}\big)^2} \pmod{p^3}, \ \  p\neq 2 $ \\ \medskip

(C.1) & $\sum_{k=0}^{\infty}(4k+1)\frac{\left( \frac{1}{2} \right)^4_k}{k!^4}=\infty$ & (C.2) & $S\Big(\frac{p-1}{2}\Big)\equiv p \pmod{p^3}, \ \  p\neq 2$ \\ \medskip

(D.1) & $\sum_{k=0}^{\infty}(6k+1)\frac{\left( \frac{1}{3} \right)^6_k}{k!^6}=1.01226...$ & (D.2) & $S\Big(\frac{p-1}{3}\Big)\equiv -p\Gamma_p\Big(\frac{1}{3}\Big)^9 \pmod{p^4},  \ \ \text{if} \ \ p \equiv 1 \pmod{6}$ \\ \medskip

(E.1) & $\sum_{k=0}^{\infty}(6k+1)(-1)^{k}\frac{\left( \frac{1}{3} \right)^3_k}{k!^3}=\frac{3\sqrt{3}}{2\pi}=\frac{3}{\Gamma\big(\frac{1}{3}\big)\Gamma\big(\frac{2}{3}\big)}$ & (E.2) & $S\Big(\frac{p-1}{3}\Big)\equiv p \pmod{p^3}, \ \ \text{if} \ \ p \equiv 1 \pmod{6}$ \\ \medskip

(F.1) & $\sum_{k=0}^{\infty}(8k+1)(-1)^{k}\frac{\left( \frac{1}{4} \right)^3_k}{k!^3}=\frac{2\sqrt{2}}{\pi}=\frac{4}{\Gamma\big(\frac{1}{4}\big)\Gamma\big(\frac{3}{4}\big)}$ & (F.2) & $S\Big(\frac{p-1}{4}\Big)\equiv \frac{-p}{\Gamma_p\big(\frac{1}{4}\big)\Gamma_p\big(\frac{3}{4}\big)} \pmod{p^3},  \ \ \text{if} \ \ p \equiv 1 \pmod{4}$ \\ \medskip

(G.1) & $\sum_{k=0}^{\infty}(8k+1) \frac{\left( \frac{1}{4} \right)^4_k}{k!^4}=\frac{2\sqrt{2}}{\sqrt{\pi}\Gamma\big(\frac{3}{4}\big)^2}$ & (G.2) & $S\Big(\frac{p-1}{4}\Big)\equiv p\frac{\Gamma_p\big(\frac{1}{2}\big)\Gamma_p\big(\frac{1}{4}\big)}{\Gamma_p\big(\frac{3}{4}\big)} \pmod{p^3},  \ \ \text{if} \ \ p \equiv 1 \pmod{4}$ \\ \medskip

(H.1) & $\sum_{k=0}^{\infty}\frac{\left( \frac{1}{2} \right)^3_k}{k!^3}=\frac{\pi}{\Gamma\big(\frac{3}{4}\big)^4}$ & (H.2) & $S\Big(\frac{p-1}{2}\Big)\equiv\begin{cases}-\Gamma_{p}\big(\frac{1}{4}\big)^4 \pmod{p^2}, &\text{if} \ \ p \equiv 1 \pmod{ 4}\\ 0 \ \ \ \ \ \ \ \ \ \ \pmod{p^2}, &\text{if} \ \ p \equiv 3 \pmod{ 4} \end{cases}$ \\ \medskip

(I.1) & $\sum_{k=0}^{\infty}\frac{1}{k+1}\frac{\left( \frac{1}{2} \right)^2_k}{k!^2}=\frac{4}{\pi}=\frac{4}{\Gamma\big(\frac{1}{2}\big)^2}$ & (I.2) & $S\Big(\frac{p-1}{2}\Big)\equiv 2p^2 \pmod{p^3}, \ \  p\neq 2$ \\ \medskip

(J.1) & $\sum_{k=0}^{\infty}\frac{6k+1}{4^k}\frac{\left( \frac{1}{2} \right)^3_k}{k!^3}=\frac{4}{\pi}=\frac{4}{\Gamma\big(\frac{1}{2}\big)^2}$ & (J.2) & $S\Big(\frac{p-1}{2}\Big)\equiv \frac{-p}{\Gamma_{p}\big(\frac{1}{2}\big)^2} \pmod{p^4}, \ \  p\neq 2, 3$ \\ \medskip

(K.1) & $\sum_{k=0}^{\infty}\frac{42k+5}{64^k} \frac{\left( \frac{1}{2} \right)^3_k}{k!^3}=\frac{16}{\pi}=\frac{16}{\Gamma\big(\frac{1}{2}\big)^2} $ & (K.2) & $S\Big(\frac{p-1}{2}\Big)\equiv \frac{-5p}{\Gamma_{p}\big(\frac{1}{2}\big)^2} \pmod{p^4}, \ \  p\neq 2$ \\ \medskip

(L.1) & $\sum_{k=0}^{\infty}\frac{6k+1}{8^k}(-1)^{k}\frac{\left( \frac{1}{2} \right)^3_k}{k!^3}=\frac{2\sqrt{2}}{\pi} =\frac{4}{\Gamma\big(\frac{1}{4}\big)\Gamma\big(\frac{3}{4}\big)}$ & (L.2) & $S\Big(\frac{p-1}{2}\Big)\equiv \frac{-p}{\Gamma_{p}\big(\frac{1}{4}\big)\Gamma_{p}\big(\frac{3}{4}\big)} \pmod{ p^3}, \ \  p\neq 2$ \\  \medskip

(M.1) & $\sum_{k=0}^{\infty}\frac{\left( \frac{1}{2} \right)^4_k}{k!^4}: \text{unknown}$ & (M.2) & $S\Big(\frac{p-1}{2}\Big)\equiv a(p) \pmod{p^3}, \  \  p\neq 2$ \\ \medskip
\end{tabular}
\end{table}

\FloatBarrier

\normalsize

Proofs of the supercongruences labeled (C.2), (H.2), and (I.2) were given by van Hamme.  Kilbourn \cite{Kilbourn} proved (M.2) via a connection to Calabi-Yau threefolds over finite fields, making use of the fact that the Calabi-Yau threefold in question is modular, which was proved by Ahlgren and Ono \cite{AhlgrenOno}, van Geemen and Nygaard \cite{vanGeemenNygaard}, and Verrill \cite{Verrill}.  The conjectures, (A.2), (B.2) and (J.2), have been proved using a variety of techniques involving hypergeometric series.  McCarthy and Osburn \cite{McCarthyOsburn} proved (A.2) using Gaussian hypergeometric series.  The supercongruence (B.2) has been proved in three ways, by Mortenson \cite{Mortenson} using a technical evaluation of a quotient of Gamma functions, by Zudilin \cite{Zudilin} using the W-Z method, and by Long \cite{Long} using hypergeometric series identities and evaluations.  Long also uses a similar but more general method in \cite{Long} to prove (J.2).  Furthermore, (D.2) has now been proved by Long and Ramakrishna in a recent preprint \cite{LongRamakrishna}.  In addition, they prove that (H.2) holds modulo $p^3$ when $p\equiv 1 \pmod4$, and provide extensions for (D.2) and (H.2) to additional primes.

This leaves five congruences left to prove: (E.2), (F.2), (G.2), (K.2), and (L.2).  In this paper, we first observe that Long's method to prove (B.2) in \cite{Long} can be used to prove (E.2), (F.2), (G.2), and (L.2) as well.  Furthermore we extend (E.2), (F.2), and (G.2) to results for additional primes, and show that (G.2) holds in fact modulo $p^4$.  We also revisit (A.2) to show it holds in fact modulo $p^5$ when $p \equiv 1\pmod{4}$.  In particular, we prove the following theorems.

\begin{theorem}\label{EF}
Let $a\in\{\frac12, \frac13, \frac14\}$, and $p$ an odd prime (we require $p\geq 5$ when $a=1/4$).  Let $b=1$ when $p\equiv 1 \pmod{\frac{1}{a}}$, and let $b=\frac{1}{a} -1$ when $p\equiv -1\pmod{\frac1a}$.  Then
\[
\sum_{k=0}^{a(bp-1)} \left(\frac{2k}a +1\right)(-1)^k\frac{(a)_k^3}{k!^3}  \equiv (-1)^{a(bp-1)} p \cdot b = \frac{-pb}{\G_p(a)\G_p(1-a)} \pmod{p^3}.
\]
\end{theorem}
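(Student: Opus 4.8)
The approach is Long's method for proving (B.2) in \cite{Long}, carried out uniformly over $a\in\{\tfrac12,\tfrac13,\tfrac14\}$ and over the two residue classes of $p$ modulo $1/a$. Throughout, write $m=1/a\in\{2,3,4\}$ and set $n=a(bp-1)=(bp-1)/m$, so that $mn+1=bp$.

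First I would pass from the truncated sum $S(n)$ to the full sum $\sum_{k=0}^{p-1}$. For $n<k\le p-1$, the Pochhammer symbol $(a)_k=(1/m)_k$ contains exactly one factor divisible by $p$: the factor $1/m+j$ is divisible by $p$ exactly when $jm+1\equiv0\pmod p$, its least nonnegative solution is $j=n$ (this is the step where $p\equiv\pm1\pmod m$ is used), and for $k\le p-1$ the next solution $j=n+p$ lies out of range; moreover that single factor equals $bp/m$, hence is divisible by $p$ but not by $p^2$, while $k!$ is prime to $p$. Consequently every term $\big(\tfrac{2k}{a}+1\big)(-1)^k(a)_k^3/k!^3$ with $n<k\le p-1$ lies in $p^3\Z_p$, so that
\[
S(n)\equiv\sum_{k=0}^{p-1}\Big(\tfrac{2k}{a}+1\Big)(-1)^k\frac{(a)_k^3}{k!^3}\pmod{p^3},
\]
uniformly in both cases. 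This full sum is what I will estimate.

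Since $\tfrac{2k}{a}+1=(1+\tfrac a2)_k/(\tfrac a2)_k$, the right-hand side above is a truncation of a very-well-poised hypergeometric series at argument $-1$, with three of the numerator parameters equal to $a$. Following Long, I would deform one (or more) of these parameters to an auxiliary value so that the series terminates --- naturally at $k=n$, since $-n\equiv a\pmod p$ --- and apply a classical terminating very-well-poised hypergeometric evaluation (ultimately resting on Dougall's summation) to write the deformed finite sum in closed form as a ratio of Pochhammer, equivalently $p$-adic gamma, values. Rewriting everything in terms of $\G_p$ via the standard relations between $(x)_k$ and $\G_p(x+k)/\G_p(x)$ (after extracting the factors divisible by $p$), and expanding in the deformation parameter --- which is of size $p$ --- a first-order cancellation forced by a symmetry of the deformed identity, together with routine valuation estimates on the remaining terms, should give
\[
\sum_{k=0}^{p-1}\Big(\tfrac{2k}{a}+1\Big)(-1)^k\frac{(a)_k^3}{k!^3}\equiv\frac{-pb}{\G_p(a)\,\G_p(1-a)}\pmod{p^3}.
\]
Identifying the right-hand side with $(-1)^{a(bp-1)}pb$ is then elementary: by the reflection formula $\G_p(x)\G_p(1-x)=(-1)^{x_0}$, where $x_0\in\{1,\dots,p\}$ with $x_0\equiv x\pmod p$, one computes $x_0=p-(p-1)/m$ when $p\equiv1\pmod m$ (so $b=1$) and $x_0=(p+1)/m$ when $p\equiv-1\pmod m$ (so $b=m-1$), and in each case a short parity check --- using that $p$ is odd --- gives $x_0\equiv a(bp-1)+1\pmod2$, whence $-pb/(\G_p(a)\G_p(1-a))=(-1)^{a(bp-1)}pb$.

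The hard part is the hypergeometric step: choosing the deformation and the parametric identity so that \emph{all} non-principal contributions to the $p$-adic expansion vanish to order $p^3$ --- concretely, so that the term linear in the deformation parameter vanishes modulo $p^2$, which in \cite{Long, LongRamakrishna} is forced by an invariance of the deformed identity under reflecting that parameter --- and then controlling the $p$-adic valuations of the Pochhammer quotients that appear, a bookkeeping that is delicate precisely because $a=1/m$ makes the successive factors of $(a)_k$ straddle multiples of $p$. Once the hypergeometric input is in hand for one value of $a$, the same argument runs in the same way for the other two, which is why a single theorem subsumes (B.2), (E.2), and (F.2); the hypothesis $p\ge5$ when $a=1/4$ serves only to avoid a small-prime degeneration in this step.
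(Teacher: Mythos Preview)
Your outline follows Long's strategy at the coarse level, but it is vague precisely where the paper's proof does real work, and one of your preliminary steps is an unnecessary detour.

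First, extending the sum from $n=a(bp-1)$ to $p-1$ is not needed and only complicates the comparison with the deformed series.  The paper works directly with the truncation at $n$: the symmetric deformation replaces two of the numerator parameters $a$ by $a(1-bx)$ and $a(1+bx)$ (and correspondingly two denominator $1$'s by $1\pm abx$), so that at $x=p$ the deformed ${}_4F_3$ terminates naturally at $k=n$.  By Lemma~\ref{cancelation} this $F(x)$ lies in $\Z_p[[x^2]]$, and its constant term $C_0$ is already the target sum truncated at $n$.

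Second, the closed-form evaluation of $F(p)$ uses Whipple's summation for a terminating very-well-poised ${}_4F_3$ at argument $-1$ (equation~\eqref{whipple}), not Dougall's ${}_5F_4$, and together with Lemma~\ref{Glemma} gives the \emph{exact} value $(-1)^{n}bp$ directly; no rewriting in terms of $\G_p$ or expansion in the deformation parameter is needed at this stage.

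Third --- and this is the genuine gap --- showing $p\mid C_2$ is not a matter of ``symmetry'' or ``routine valuation estimates''.  The symmetry $x\mapsto -x$ already kills the odd powers of $x$; what remains to prove is that the coefficient of $x^2$ is divisible by $p$, and this does \emph{not} follow from any reflection invariance.  The paper obtains it from a second, independent hypergeometric identity: Whipple's ${}_6F_5\to{}_3F_2$ transformation~\eqref{6F5identity}, specialized with the extra parameters $e=1$ and $f=a(1-bp)$.  The gamma prefactor then collapses to an explicit factor $bp$, and since the resulting ${}_6F_5$ is congruent to $F(x)$ modulo $p$ as a series in $x^2$, every coefficient $C_{2j}$ (in particular $C_2$) is divisible by $p$.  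This is the heart of the argument, and your proposal does not supply it.

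Your verification that $(-1)^{n}bp=-pb/(\G_p(a)\G_p(1-a))$ via the reflection formula~\eqref{Gpreflection} is correct.
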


We observe that when $a=\frac12, \frac13, \frac14$ and $p\equiv 1 \pmod{\frac1a}$, Theorem \ref{EF} gives (B.2), (E.2), and (F.2), respectively.  Furthermore, for primes $p \equiv 2 \pmod{3}$, Theorem \ref{EF} yields the following new generalization of (E.2)
\begin{equation}\label{Egen}
\sum_{k=0}^{\frac{2p-1}3} \left(6k+1\right)(-1)^k\frac{(\frac13)_k^3}{k!^3} \equiv -2p \pmod{p^3}.
\end{equation}
Similarly, for primes $p\equiv 3 \pmod{4}$, Theorem \ref{EF} yields the following new generalization of (F.2)
\begin{equation}\label{Fgen}
\sum_{k=0}^{\frac{3p-1}4} \left(8k+1\right)(-1)^k\frac{(\frac14)_k^3}{k!^3} \equiv \frac{-3p}{\G_p(\frac14)\G_p(\frac34)} = 3 \left ( \frac{-2}p \right ) p \pmod{p^3}.
\end{equation}

\begin{theorem}\label{CG}
Let $a\in\{\frac12, \frac14\}$, and $p$ an odd prime (we require $p\geq 5$ when $a=1/4$).  Let $b=1$ when $p\equiv 1 \pmod{\frac{1}{a}}$, and let $b=\frac{1}{a} -1$ when $p\equiv -1\pmod{\frac1a}$.  Then
\[
\sum_{k=0}^{a(bp-1)} \left(\frac{2k}a+1\right)\frac{(a)_k^4}{k!^4} \equiv -(-1)^{a(bp-1)} p\cdot b\delta \cdot \G_p(1-2a) \G_p(a)^2 \pmod{p^4},
\]
where $\delta = \delta_{ab}=1$ when $(a,b)\in \{(\frac12,1), (\frac14,1)\}$, and $\delta =p/2$ when $(a,b)=(\frac14,3)$.
\end{theorem}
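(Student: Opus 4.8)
The plan is to adapt to the ${}_{5}F_{4}$ setting the method Long used for (B.2) and (J.2), running parallel to the argument for Theorem~\ref{EF} one hypergeometric level up: realize the left-hand side as a terminating very-well-poised ${}_{5}F_{4}$, evaluate that terminating series in closed form by Dougall's summation, and then transfer the identity to the $p$-adic gamma function while controlling the resulting error terms modulo $p^{4}$.

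Set $n:=a(bp-1)$ and use $\tfrac{2k}{a}+1=\tfrac{(1+a/2)_k}{(a/2)_k}$, so that the sum in the theorem is the truncation at $k=n$ of the very-well-poised series ${}_{5}F_{4}\bigl(a,1+\tfrac a2,a,a,a;\,\tfrac a2,1,1,1;\,1\bigr)$ (which diverges when $a=\tfrac12$, cf. (C.1), so only its truncations are meaningful). Since $0\le n<p$ we have $-n=a(1-bp)\equiv a$ and $1+a+n=1+abp\equiv 1\pmod{p\Z_p}$; hence replacing the last upper parameter $a$ by $-n$ (and, to preserve the very-well-poised structure, the last lower parameter $1$ by $1+a+n$) produces a series that terminates exactly at $k=n$, whose $k$-th term is $\bigl(\tfrac{2k}{a}+1\bigr)\tfrac{(a)_k^{4}}{k!^{4}}\,\rho_k$ with
\[
\rho_k=\frac{(-n)_k}{(a)_k}\cdot\frac{k!}{(1+a+n)_k}=\prod_{j=0}^{k-1}\Bigl(1-\tfrac{abp}{a+j}\Bigr)\Big/\prod_{j=1}^{k}\Bigl(1+\tfrac{abp}{j}\Bigr)\in 1+p\Z_p .
\]
Writing $\rho_k=1+p\gamma_k^{(1)}+p^{2}\gamma_k^{(2)}+p^{3}\gamma_k^{(3)}+\cdots$ (the $\gamma_k^{(r)}$ being polynomials in the $p$-adic units $\tfrac1{a+j},\tfrac1j$) and applying Dougall's terminating very-well-poised ${}_{5}F_{4}$ summation with the two free parameters set equal to $a$, the terminating series evaluates to $\tfrac{(1+a)_n(1-a)_n}{(n!)^{2}}$, so with $S$ the sum in the theorem,
\[
S=\frac{(1+a)_n\,(1-a)_n}{(n!)^{2}}-\sum_{r\ge1}p^{r}\sum_{k=0}^{n}\Bigl(\tfrac{2k}{a}+1\Bigr)\frac{(a)_k^{4}}{k!^{4}}\,\gamma_k^{(r)} .
\]
Thus only $r=1,2,3$ matter modulo $p^{4}$, and the $r$-th correction sum is needed only modulo $p^{4-r}$.

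Next I would evaluate the main term. Expressing $(1\pm a)_n$ and $n!$ through $\Gamma_p$ by means of $\Gamma_p(x+1)=-x\Gamma_p(x)$ for $x$ a $p$-adic unit and $\Gamma_p(x+1)=-\Gamma_p(x)$ for $x\in p\Z_p$ yields the sign $(-1)^{a(bp-1)}$ and the power of $p$: exactly one factor of $p$ appears, from $(1+a)_n$, when $(a,b)\in\{(\tfrac12,1),(\tfrac14,1)\}$, while for $(a,b)=(\tfrac14,3)$ a second factor of $p$ appears, from $(1-a)_n$ — this is where $\delta$ comes from. The reflection formula $\Gamma_p(x)\Gamma_p(1-x)=(-1)^{a_0(x)}$, $a_0(x)\in\{1,\dots,p\}$ with $a_0(x)\equiv x\pmod p$, together with the multiplication formula for $\Gamma_p$, then rewrites $\tfrac{(1+a)_n(1-a)_n}{(n!)^{2}}$ as $-(-1)^{a(bp-1)}\,p\,b\,\delta\,\Gamma_p(1-2a)\Gamma_p(a)^{2}$ times a factor congruent to $1$ modulo $p$, whose higher expansion I would compute by $p$-adic Taylor expansion of $\Gamma_p$ about the pertinent integer and half-integer points. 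For the correction sums, each takes the form $\sum_{k=0}^{n}\bigl(\tfrac{2k}{a}+1\bigr)\tfrac{(a)_k^{4}}{k!^{4}}\cdot(\text{harmonic-type weight built from }\tfrac1{a+j},\tfrac1j)$, and after re-expressing $(a)_k^{4}/k!^{4}$ in terms of the summands of the terminating ${}_{5}F_{4}$ these become very-well-poised sums weighted by harmonic polynomials, which are evaluated by differentiating Dougall's ${}_{5}F_{4}$ (and ${}_{7}F_{6}$) summations, or Whipple's ${}_{7}F_{6}\to{}_{4}F_{3}$ transformation, with respect to their parameters and reducing $p$-adically; for $r=3$ only the residue modulo $p$ is required and can be read off the terms directly. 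Assembling the main term and the three corrections and simplifying the $\Gamma_p$-expansions using $p$-adic Wilson-type congruences and the standard congruences for harmonic sums modulo powers of $p$ then gives the claimed identity modulo $p^{4}$ in each of the three cases $(a,b)\in\{(\tfrac12,1),(\tfrac14,1),(\tfrac14,3)\}$.

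The main obstacle is this last step: reaching precision $p^{4}$, rather than the $p^{3}$ that suffices for (C.2) and (G.2), demands either enough companion hypergeometric evaluations or sharp enough $p$-adic estimates for the correction sums, and in carrying these out one must exploit both the extra factor of $p$ that $(a)_k^{4}/k!^{4}$ picks up for the larger indices $k$ and the vanishing of $\tfrac{2k}{a}+1$ modulo $p$ at the single exceptional index, since these are what drop the $r=2,3$ corrections below the accuracy that would otherwise obstruct the argument.
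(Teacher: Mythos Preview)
Your plan differs substantially from the paper's proof, and the difference is exactly at the point you flag as the ``main obstacle''.

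You perturb a \emph{single} pair of parameters, replacing one upper $a$ by $-n=a(1-bp)$ and the matching lower $1$ by $1+a+n$. This gives $\rho_k\equiv 1\pmod{p}$ only, so you must control three separate correction sums (at orders $p,p^{2},p^{3}$) to reach $p^{4}$. You then propose to evaluate these via parameter-differentiation of Dougall/Whipple identities and harmonic-sum congruences, but you do not actually carry this out; the paragraph beginning ``For the correction sums'' is a wish list, not an argument. Getting the $r=1$ sum correct modulo $p^{3}$ already requires second derivatives of the ${}_5F_4$ evaluation together with nontrivial $p$-adic harmonic identities, and there is no indication how the pieces will cancel. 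As written this is a genuine gap.

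The paper avoids all of this with a single idea: perturb \emph{three} parameter pairs simultaneously using a primitive cube root of unity, setting
\[
G(x)= {}_{5}F_{4}\!\left[\begin{matrix}\tfrac{a}{2}+1,\ a,\ a(1-bx),\ a(1-b\omega x),\ a(1-b\omega^{2}x)\\[2pt]\tfrac{a}{2},\ 1+abx,\ 1+ab\omega x,\ 1+ab\omega^{2}x\end{matrix};1\right]_{a(bp-1)}.
\]
By the elementary symmetric-function argument (Lemma~\ref{cancelation}), $G(x)\in\Z_p[[x^{3}]]$, so $G(p)\equiv C_0\pmod{p^{3}}$ automatically, with $C_0$ the desired left-hand side. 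Reaching $p^{4}$ then reduces to showing the single coefficient $C_3$ is divisible by $p$, which the paper does in one stroke: a ${}_7F_6\to{}_4F_3$ Whipple transformation with two extra parameters $f=a(1-bp)$, $g=1$ exhibits a series congruent to $G(x)$ modulo $p$ as $p$ times something in $\Z_p[[x]]$. The closed form $G(p)$ from the ${}_5F_4$ summation is then rewritten in $\Gamma_p$ and simplified modulo $p^{3}$ using the Long--Ramakrishna expansion; this is where the $G_1,G_2$ identities \eqref{Gfacts} do the cancellation work that in your scheme would have to come from the harmonic sums.

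In short: your single-perturbation/Dougall route is conceptually sound but leaves precisely the hard analytic step undone, whereas the root-of-unity perturbation converts that step into a one-line divisibility check.
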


We observe that when $a=\frac12$, Theorem \ref{CG} gives (C.2) modulo the stronger power $p^4$.  When $a = \frac14$ and $p\equiv 1 \pmod{4}$,  Theorem \ref{CG} gives (G.2) modulo the stronger power $p^4$.  Moreover, when $a = \frac14$ and $p\equiv -1 \pmod{4}$ for $p\geq 5$,  Theorem \ref{CG} gives the following new generalization of (G.2) modulo $p^4$,
\begin{equation}\label{G2Gen}
\sum_{k=0}^{\frac{3p-1}{4}} (8k+1)\frac{(\frac{1}{4})_k^4}{k!^4} \equiv -\frac{3}{2}p^2 \cdot (-1)^{\frac{3p-1}{4}} \cdot \G_p\left(\frac{1}{2}\right)\G_p\left(\frac{1}{4}\right)^2 \pmod{p^3}.
\end{equation}

\begin{theorem}\label{L}
For any odd prime $p$,
\[
\sum_{k=0}^{\frac{p-1}2}(6k+1)\left(\frac{-1}{8}\right)^k\frac{(\frac12)_k^3}{k!^3} \equiv \frac{-p}{\Gamma_{p}\left(\frac{1}{4}\right)\Gamma_{p}\left(\frac{3}{4}\right)}  = \left ( \frac{-2}p \right ) p \pmod{p^3}.
\]
\end{theorem}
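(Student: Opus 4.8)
The plan is to adapt the hypergeometric approach of Long used for (B.2) in the proof of Theorem \ref{EF}. The first step is to realize the truncated sum, or a $p$-adically close variant of it, as a terminating very-well-poised hypergeometric series: one introduces parameters congruent to $\tfrac12$ modulo $p$, arranged so that the series terminates at exactly $k=\tfrac{p-1}{2}$ (mirroring the role of $\tfrac{1-p}{2}$ in the treatment of (B.2)). In this realization the linear factor $6k+1$ comes from a very-well-poised pair $\tfrac{(1+a/2)_k}{(a/2)_k}$, while the weight $\left(\tfrac{-1}{8}\right)^k$ — the feature not present in (B.2), where the corresponding weight $(-1)^k$ arose from a confluent limit at argument $1$ — is accommodated by the particular classical transformation invoked below, together possibly with a confluent limit handling one of the $(\tfrac12)_k$ factors.

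The essential classical input is an evaluation expressing this terminating series as an explicit quotient of Pochhammer symbols, equivalently of Gamma values. I would obtain it from a quadratic transformation combined with Dougall's ${}_5F_4$ summation (or Whipple's ${}_7F_6$ transformation followed by a terminating Saalsch\"utz or Dougall summation), just as in Long's derivations of the Ramanujan-type $1/\pi$ series. Note that (L.1) and (F.1) share the value $\tfrac{2\sqrt2}{\pi}=\tfrac{4}{\Gamma(1/4)\Gamma(3/4)}$; this common evaluation is the classical shadow of the identity to be refined, and it is the source of the right-hand side $\tfrac{-p}{\Gamma_p(1/4)\Gamma_p(3/4)}$.

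With the terminating sum written as a Gamma quotient, the remaining work is $p$-adic. I would pass to $p$-adic Gamma values using the reflection formula $\Gamma_p(x)\Gamma_p(1-x)=\pm1$, the Gauss--Legendre multiplication formula, and local analyticity of $\Gamma_p$, then Taylor-expand in the perturbation parameter (which lies in $p\Z_p$) and truncate modulo $p^3$. The concluding identity $\tfrac{-p}{\Gamma_p(1/4)\Gamma_p(3/4)} = \left(\tfrac{-2}{p}\right)p$ follows from $\Gamma_p(\tfrac14)\Gamma_p(\tfrac34) = -\left(\tfrac{-2}{p}\right)$, a standard consequence of the reflection formula together with the Gross--Koblitz formula, valid for every odd $p$; in particular no splitting into residue classes of $p$ modulo $4$ is needed, which is why Theorem \ref{L} holds uniformly.

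The step I expect to be the main obstacle is the transformation bringing the argument to $-\tfrac18$: the $\left(-\tfrac18\right)$-series is not itself very-well-poised, so no single Dougall or Whipple summation applies directly, and the auxiliary transformation must remain valid, or controllably valid, for the perturbed terminating parameters so that the argument stays at exactly $-\tfrac18$ and the construction works for all odd $p$ at once. A secondary technical point is bookkeeping the errors: the transformation and the $p$-adic expansion each contribute corrections, and one must check that their combined contribution is $O(p^3)$.
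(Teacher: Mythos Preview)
Your broad strategy---perturb the parameters by multiples of $p$ so the series terminates at $k=\tfrac{p-1}{2}$, evaluate the terminating series via a classical identity, then analyze $p$-adically---is indeed Long's method and is what the paper does. But two points in your plan are gaps rather than details.

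First, the ``main obstacle'' you flag is resolved in the paper not by a quadratic transformation plus Dougall/Whipple, but by a single closed-form evaluation of a ${}_4F_3$ at argument $-\tfrac18$ due to Karlsson:
\[
\pFq{4}{3}{3a,a+1,b,1-b}{,a,\tfrac{3a+b+1}{2},\tfrac{3a-b+2}{2}}{-\tfrac18}
=\frac{\G(\tfrac{3a+b+1}{2})\G(\tfrac{3a-b+2}{2})}{\G(\tfrac{3a+1}{2})\G(\tfrac{3a+2}{2})}.
\]
With $a=\tfrac16$ this is exactly the right shape: the very-well-poised pair $(\tfrac{a}{2}+1)_k/(\tfrac{a}{2})_k$ becomes $\tfrac{(7/6)_k}{(1/6)_k}=6k+1$, and the argument is already $-\tfrac18$. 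The perturbed series $L(p)$ is obtained by taking $b=\tfrac{1-p}{2}$ (together with its companion $1-b=\tfrac{1+p}{2}$), and its exact value $\left(\tfrac{-2}{p}\right)p$ is then read off from the $\G$-quotient (this is Long's Lemma~4.4). Your proposed route through quadratic transformations is not needed and, as you suspect, would be awkward to make uniform in $p$.

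Second, and more seriously, what you call a ``secondary technical point'' is in fact the crux. Writing $L(x)=C_0+C_2x^2+C_4x^4+\cdots\in\Z_p[[x^2]]$, Lemma~\ref{cancelation} only gives $L(p)\equiv C_0\pmod{p^2}$; Taylor-expanding the $\G$-quotient, however carefully, cannot by itself force the $C_2p^2$ term to vanish modulo $p^3$. The paper supplies a separate argument for $p\mid C_2$: it shifts $a$ to $\tfrac{1-p}{6}$ in the \emph{same} Karlsson identity, obtaining a terminating ${}_4F_3$ that is congruent to $L(x)$ modulo $p$ coefficientwise, and whose $\G$-quotient has a pole in the denominator (one of $\tfrac{3-p}{4}$, $\tfrac{5-p}{4}$ is a nonpositive integer), so the series is identically $0$. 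Hence every coefficient of $L(x)$, in particular $C_2$, is divisible by $p$, and $L(p)\equiv C_0\pmod{p^3}$. Your proposal does not contain any mechanism of this kind, so as written it would stall at $p^2$.
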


Theorem \ref{L} yields the following corollary, which is difficult to prove otherwise (see Remark 1 of \cite{Long}).

\begin{corollary}\label{Lcor}
For any odd prime $p$,
\[
\sum_{k=0}^{\frac{p-1}2} (6k+1)\frac{(\frac12)_k^3}{(k!)^3}\left [ \sum_{j=1}^k \left (\frac1{(2j-1)^2}-\frac1{16j^2}\right ) \right ]\left(-\frac18\right)^k\equiv 0 \pmod p.
\]
\end{corollary}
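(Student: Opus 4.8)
The plan is to realize the bracketed harmonic sum as the coefficient of $t^2$ in a one-parameter deformation of the left-hand side of Theorem~\ref{L}, and then to observe that the deformed sum is congruent modulo $p^3$ to a closed form that is a multiple of $p$; comparing coefficients of $t^2$ then forces the corollary's sum to be $\equiv 0 \pmod p$.

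First I would introduce a formal parameter $t$ and set
\[
F(t) := \sum_{k=0}^{\frac{p-1}{2}} (6k+1)\left(-\frac18\right)^{k} \frac{\left(\frac12+t\right)_k\left(\frac12-t\right)_k\left(\frac12\right)_k}{\left(1+\frac t2\right)_k\left(1-\frac t2\right)_k\,k!}\,.
\]
A short computation with logarithms of Pochhammer symbols shows that
\[
\log\frac{\left(\frac12+t\right)_k\left(\frac12-t\right)_k}{\left(1+\frac t2\right)_k\left(1-\frac t2\right)_k} = \sum_{j=1}^{k}\Big(\log\big((j-\tfrac12)^2-t^2\big)-\log\big(j^2-\tfrac{t^2}{4}\big)\Big)
\]
is even in $t$, has vanishing linear term, and has $t^2$-coefficient equal to $-\sum_{j=1}^{k}\big(\tfrac1{(j-1/2)^2}-\tfrac1{4j^2}\big) = -4\sum_{j=1}^{k}\big(\tfrac1{(2j-1)^2}-\tfrac1{16j^2}\big)$. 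Exponentiating,
\[
\frac{\left(\frac12+t\right)_k\left(\frac12-t\right)_k\left(\frac12\right)_k}{\left(1+\frac t2\right)_k\left(1-\frac t2\right)_k\,k!} = \frac{\left(\frac12\right)_k^3}{k!^3}\left(1-4t^2\sum_{j=1}^{k}\Big(\frac1{(2j-1)^2}-\frac1{16j^2}\Big)+O(t^4)\right).
\]
Hence $F(0)$ is the sum in Theorem~\ref{L}, and the coefficient of $t^2$ in $F(t)$ is $-4$ times the sum in Corollary~\ref{Lcor}. Since $\gcd(4,p)=1$, it suffices to prove $[t^2]F(t)\equiv 0\pmod p$.

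Next I would rerun the proof of Theorem~\ref{L} with the parameter $t$ carried along throughout. The hypergeometric transformation and the terminating evaluation underlying that proof hold for a whole family of parameters, and I expect the deformation above to be exactly the one obtained by perturbing the two relevant $\tfrac12$'s to $\tfrac12\pm t$ — the halved shifts $1\pm\tfrac t2$ in the lower parameters being the trace of a quadratic transformation, which is also why one sees $\tfrac1{16j^2}$ here rather than the $\tfrac1{4j^2}$ that occurs in Long's analogue of this corollary for (B.2) (Remark~1 of \cite{Long}). The conclusion of that argument will be a congruence $F(t)\equiv G(t)\pmod{p^3}$, where $G(t)$ is the corresponding closed form: a quotient of $p$-adic Gamma values with arguments shifted by multiples of $t$, times an overall factor of $p$. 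In particular every Taylor coefficient of $G(t)$ lies in $p\,\mathbb Z_p$, so comparing coefficients of $t^2$ gives $[t^2]F(t)\equiv[t^2]G(t)\equiv 0\pmod p$, which is what we want.

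The main obstacle is this middle step: verifying that the deformed truncated sum $F(t)$ really is governed by the same hypergeometric machinery that proves Theorem~\ref{L}. It may be necessary to let $z=-\tfrac18$ vary with $t$, or to insert a Gamma-quotient prefactor, so the cleanest parametrized supercongruence could differ slightly from the $F(t)$ written above; in that case I would instead work with whatever deformation the identity actually produces and then check that its $t^2$-coefficient still reproduces $\sum_{j}(\tfrac1{(2j-1)^2}-\tfrac1{16j^2})$, up to a multiple of the $t^0$-congruence of Theorem~\ref{L}. That bookkeeping is routine; the remaining ingredients — the Pochhammer expansion and the divisibility of $G(t)$ by $p$ — are immediate.
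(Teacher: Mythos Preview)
Your deformation $F(t)$ is exactly the paper's $L(x)$ with $x=2t$, and your Pochhammer expansion correctly shows that the Corollary's sum equals $-C_2$, where $L(x)=C_0+C_2x^2+\cdots$ in $\Z_p[[x^2]]$. So the strategy---realize the harmonic sum as the second-order coefficient of the deformed series and then use the mod-$p^3$ information---is precisely the paper's.

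Where you make things harder than necessary is step~3. You look for a \emph{parametrized} congruence $F(t)\equiv G(t)\pmod{p^3}$ with $G(t)$ a $p$-adic Gamma quotient depending on $t$, and you flag this as the main obstacle. No such identity is needed (or available in that form). The paper simply specializes $t=p/2$: one has $F(p/2)=L(p)=\left(\frac{-2}{p}\right)p$ \emph{exactly} from the hypergeometric evaluation, and $F(0)\equiv F(p/2)\pmod{p^3}$ is nothing other than Theorem~\ref{L}. Subtracting,
\[
F(0)-F(p/2)\;=\;-[t^2]F(t)\cdot\frac{p^2}{4}+O(p^4)\;\equiv\;0\pmod{p^3},
\]
hence $[t^2]F(t)\equiv 0\pmod p$, which is what you want. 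So the ``obstacle'' dissolves: you do not carry $t$ through the hypergeometric machinery, you evaluate at the single point $t=p/2$ and invoke the theorem you already have. (Alternatively, the proof of Theorem~\ref{L} itself shows that $L(x)$ is congruent mod $p$ to a ${}_4F_3$ that vanishes identically, so $p\mid C_i$ for every $i$; in particular $p\mid C_2$, and you are done without even citing the theorem's statement.)
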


We also have the following theorem which strengthens the (A.2) congruence when $p \equiv 1\pmod{4}$ to a congruence modulo $p^5$.

\begin{theorem}\label{A}
For any prime $p\equiv 1 \pmod{4}$, with $p>5$ 
\[
\sum_{k=0}^{\frac{p-1}{2}} (4k+1)(-1)^k\frac{(\frac{1}{2})_k^5}{k!^5} \equiv -p\cdot \G_p\left(\frac14\right)^4 \pmod{p^5}.
\]
\end{theorem}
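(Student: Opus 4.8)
The plan is to prove the congruence via the hypergeometric-series method Long used for (B.2) in \cite{Long}, in the higher-order form developed by Long and Ramakrishna in \cite{LongRamakrishna}, carrying the $p$-adic analysis all the way to modulus $p^5$. Write $n=\tfrac{p-1}{2}$. The left-hand side is the truncation at $k=n$ of the very-well-poised series
\[
{}_6F_5\!\left[\begin{matrix}\tfrac12,\ \tfrac54,\ \tfrac12,\ \tfrac12,\ \tfrac12,\ \tfrac12\\ \tfrac14,\ 1,\ 1,\ 1,\ 1\end{matrix}\ ;\ -1\right],
\]
whose full ($k\to\infty$) value is Ramanujan's evaluation (A.1); here the factor $4k+1$ is just $\tfrac{(5/4)_k}{(1/4)_k}$. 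A useful first reduction: for $\tfrac{p+1}{2}\le k\le p-1$ the symbol $(\tfrac12)_k$ acquires exactly one factor of $p$ (the factor $\tfrac p2$; the next multiple of $p$ among $1,3,\dots,2k-1$ is $3p>2p-3$), so $(\tfrac12)_k^5/k!^5\equiv 0\pmod{p^5}$, and the sum over $0\le k\le n$ agrees modulo $p^5$ with the sum over $0\le k\le p-1$.

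To obtain a supercongruence of high strength one replaces the parameter-free identity (A.1) by an exact finite identity carrying a free parameter and then specializes that parameter $p$-adically. Deform the very-well-poised data so that one numerator entry becomes $\tfrac{1-p}{2}=-n$ (with matching lower entry $1+\tfrac p2$), balancing a second entry by $\mp\tfrac p2$ where needed to keep the series very-well-poised (and, in a ${}_7F_6(1)$ incarnation, $2$-balanced); the series then terminates at exactly $k=n$, which is van Hamme's truncation point. On this terminating series apply the classical terminating evaluations of Whipple and Dougall type — the very-well-poised ${}_7F_6(1)$ summation, or Whipple's ${}_6F_5(-1)\to{}_3F_2(1)$ transformation followed by a Pfaff--Saalsch\"utz or Dixon evaluation, i.e. the same chain that yields (A.1) in the limit — to express the deformed truncated sum exactly as an explicit finite product of Pochhammer and gamma factors in $p$.

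It remains to match the two sides modulo $p^5$. On the left, the deformed summand equals $\tfrac{(\frac12)_k^5}{k!^5}$ times
\[
\prod_{j=1}^{k}\!\Big(1-\tfrac{p}{2j-1}\Big)\Big/\prod_{j=1}^{k}\!\Big(1+\tfrac{p}{2j}\Big)=1-p\Big(\sum_{j=1}^{k}\tfrac{1}{2j-1}+\tfrac12 H_k\Big)+O(p^2),\qquad H_k=\textstyle\sum_{j\le k}\tfrac1j,
\]
so after transposing these harmonic-weighted Ramanujan-type corrections the theorem becomes a statement purely about the explicit closed form, which one expands via the $p$-adic gamma function: the reflection formula $\Gamma_p(x)\Gamma_p(1-x)=(-1)^{a}$ ($a$ a residue-dependent sign), the Gauss--Legendre multiplication formula, and the Taylor expansion $\Gamma_p(x+pt)=\Gamma_p(x)\big(1+pt\,G_1(x)+\tfrac{(pt)^2}{2}G_2(x)+\cdots\big)$ about $x\in\{\tfrac14,\tfrac12,\tfrac34,1\}$. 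One is then reduced to showing that a specific $p$-adic power series $c_1p+c_2p^2+c_3p^3+\cdots$ — assembled from the gamma-expansion coefficients and the harmonic corrections — is $\equiv 0\pmod{p^4}$, so that the closed form is $-p\,\Gamma_p(\tfrac14)^4$ modulo $p^5$. The reason this succeeds, and the source of the extra strength over the $p^3$ of van Hamme's conjecture, is the very-well-poised symmetry: the deformation is invariant (in a suitable sense) under the reflection $x\mapsto -x$ of the shift parameter, which forces the odd-order contributions to vanish identically, while the surviving even-order remainder collapses under classical Wolstenholme-type harmonic-sum congruences (and their quadratic and half-range analogues) together with the companion congruences satisfied by the $G_i(\tfrac14)$.

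The main obstacle is precisely this last cancellation. One must expand both sides far enough that the genuine relations linking the $p$-adic gamma coefficients $G_i(\tfrac14),G_i(\tfrac12),G_i(\tfrac34)$ to the truncated power sums $\sum_{j\le n}(2j-1)^{-m}$ and $\sum_{j\le n}j^{-m}$ ($m\le 4$) actually surface, use the very-well-poised symmetry to discard the odd-order terms, and then verify that the even-order remainder vanishes to the precise $p$-adic order required — this is where the modulus $p^5$ (rather than a weaker one) is won or lost, and it is considerably more delicate here than in the degree-$3$ cases (B.2), (E.2), (F.2). The hypotheses $p\equiv 1\pmod 4$ and $p>5$ are used to ensure that $\tfrac14,\tfrac34$ are $p$-adic units at which $\Gamma_p$ is evaluated away from the zeros and poles of the gamma factors in play, and that the terminating Whipple/Dougall identities invoked are non-degenerate.
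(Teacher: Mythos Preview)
Your proposal misses the central device that drives the paper's proof and, as written, does not reach modulus $p^5$.

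The paper does \emph{not} deform by a single pair $\tfrac12\mapsto\tfrac{1\mp p}{2}$; it deforms \emph{four} of the five upper $\tfrac12$'s simultaneously to $\tfrac{1-p}{2},\tfrac{1+p}{2},\tfrac{1-ip}{2},\tfrac{1+ip}{2}$ (with matching lower parameters $1\pm\tfrac p2,\,1\pm\tfrac{ip}2$), exploiting that $i\in\Z_p$ when $p\equiv 1\pmod 4$. By Lemma~\ref{cancelation} with $n=4$ (fourth roots of unity), the resulting truncated series $A(x)$ lies in $\Z_p[[x^4]]$, so $A(p)\equiv C_0\pmod{p^4}$ for free---no harmonic-sum analysis at all. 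One then shows $p\mid C_4$ by comparing $A(x)$ modulo $p$ with an auxiliary series $A'(x)$ (obtained by also shifting the well-poised pair $\tfrac54,\tfrac14$) that vanishes identically via Whipple's ${}_6F_5\to{}_3F_2$ transformation; this upgrades the congruence to $A(p)\equiv C_0\pmod{p^5}$. Finally $A(p)$ is evaluated exactly by the ${}_6F_5\to{}_3F_2$ transformation followed by the Watson-type ${}_3F_2$ summation \eqref{AAR3.5.5}, and the resulting gamma quotient is reduced modulo $p^4$ (hence $p^5$ after the leading factor of $p$) using the extended Taylor expansion of $\Gamma_p$ in Remark~\ref{LRext}.

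Your scheme instead deforms only one (or at best two, symmetrically) of the $\tfrac12$'s. The ratio you wrote, $\prod_{j\le k}(1-\tfrac{p}{2j-1})/\prod_{j\le k}(1+\tfrac{p}{2j})$, is the one-pair deformation, which is \emph{not} invariant under $p\mapsto -p$; so the ``odd-order terms vanish by very-well-poised symmetry'' claim does not apply to it. Even under the most charitable reading (a symmetric two-pair $\pm p$ deformation), you land in $\Z_p[[x^2]]$, and the standard $p\mid C_2$ trick gives only $p^3$, exactly van Hamme's original strength. Pushing from $p^3$ to $p^5$ by your route would require, at minimum, establishing that the weighted harmonic corrections of orders $2$, $3$, $4$ each vanish to the right $p$-power---congruences of the shape $\sum_{k\le n}(4k+1)(-1)^k\tfrac{(1/2)_k^5}{k!^5}\,P_k\equiv 0$ modulo $p^{3},p^{2},p$ for explicit symmetric functions $P_k$ of $\sum_{j\le k}(2j-1)^{-m}$ and $H_k^{(m)}$. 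None of these are proved in your outline, none follow from ordinary Wolstenholme, and you give no mechanism linking the $G_i(\tfrac14)$ to these particular weighted sums. That is the genuine gap: the extra two powers of $p$ in the paper come from the fourth-root-of-unity deformation, not from harmonic-sum cancellations, and your proposal supplies no substitute for that idea.
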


In Section \ref{preliminaries} we discuss the gamma and $p$-adic gamma functions, as well as some useful lemmas.  In Sections \ref{E2F2}-\ref{A2} we prove our results.  In Section \ref{conjectures}, we conclude with some more general conjectures, which are supported by computational evidence from work done in Sage. 

This leaves only (K.2) from the original van Hamme conjectures, which doesn't seem to yield to this method.  As mentioned earlier, since this case corresponds to a CM elliptic curve, we know that (K.2) holds modulo $p^2$ by \cite{CDLNS}; however it remains to be proved modulo $p^4$ as conjectured.  

\section{Preliminaries}\label{preliminaries}

In this section we review hypergeometric series notation, some facts about the gamma function $\G(z)$, and the $p-$adic gamma function $\G_p(z)$.  First, we give a lemma that will be important for us later.  Recall the definition of the rising factorial for a positive integer $k$,
\[
(a)_k := (a)(a+1)\cdots(a+k-1).
\]
For $r$ a nonnegative integer and $\alpha_i,\beta_i \in \mathbb{C}$, the hypergeometric series $_{r+1}F_{r}$ is defined by
\[
\pFq{r+1}{r}{\alpha_1, \ldots , \alpha_{r+1}}{ \beta_1 , \ldots , \beta_r}{\lambda} =
\sum_{k= 0}^{\infty} \frac{(\alpha_1)_k(\alpha_2)_k \ldots (\alpha_{r+1})_k}{(\beta_1)_k \ldots (\beta_r)_k} \cdot \frac{\lambda^k}{k!},
\] which converges for $|\lambda|<1$. We write $$\pFq{r+1}{r}{\alpha_1, \ldots , \alpha_{r+1}}{ \beta_1 , \ldots , \beta_r}{\lambda}_n = \sum_{k= 0}^{n} \frac{(\alpha_1)_k(\alpha_2)_k \ldots (\alpha_{r+1})_k}{(\beta_1)_k \ldots (\beta_r)_k} \cdot \frac{\lambda^k}{k!},
$$ to denote the truncation of the series after the $\lambda^n$ term.

\begin{lemma}\label{cancelation}
Let $p$ be prime and $\zeta$ a primitive $n$th root of unity for some positive integer $n$.  If $a,b \in\Q \cap \Z_p^\times$ and $k$ is a positive integer such that $(a+j)\in\Z_p^\times$ for each $0\leq j\leq k-1$, then
\[
(a-b p)_k (a-b\zeta p)_k \cdots (a-b\zeta^{n-1} p)_k \equiv (a)_k^n  \pmod{p^n},
\]
and does not vanish modulo $p$.
Moreover for an indeterminate $x$, 
\[
(a-b x)_k (a-b\zeta x)_k \cdots (a-b\zeta^{n-1} x)_k \in \Z_p[[x^n]],
\] 
and is invertible in $\Z_p[[x^n]]$.
\end{lemma}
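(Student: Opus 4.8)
The plan is to expand each rising factorial into its $k$ linear factors, swap the order of the two resulting products, and collapse the inner product via a cyclotomic identity. Writing $(a-b\zeta^i x)_k=\prod_{j=0}^{k-1}\big((a+j)-b\zeta^i x\big)$ and interchanging the products (legitimate since everything commutes),
\[
\prod_{i=0}^{n-1}(a-b\zeta^i x)_k=\prod_{j=0}^{k-1}\ \prod_{i=0}^{n-1}\big((a+j)-b\zeta^i x\big).
\]
Because $\zeta$ is a primitive $n$th root of unity, $\{\zeta^i:0\le i\le n-1\}$ is exactly the set of all $n$th roots of unity, so $\prod_{i=0}^{n-1}(T-\zeta^i)=T^n-1$ as polynomials; substituting $T=Y/Z$ and clearing denominators gives $\prod_{i=0}^{n-1}(Y-\zeta^i Z)=Y^n-Z^n$ in any commutative ring. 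Taking $Y=a+j$ and $Z=bx$ turns the inner product into $(a+j)^n-b^n x^n$, hence
\[
\prod_{i=0}^{n-1}(a-b\zeta^i x)_k=\prod_{j=0}^{k-1}\big((a+j)^n-b^n x^n\big).
\]

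With this closed form the two ``moreover'' assertions are essentially free. Each factor $(a+j)^n-b^nx^n$ is a polynomial in $x^n$ whose coefficients lie in $\Z_p$, using that $a,b\in\Z_p$ and $a+j\in\Z_p$ (all part of the hypotheses), so the product lies in $\Z_p[x^n]\subseteq\Z_p[[x^n]]$. Its constant term is $\prod_{j=0}^{k-1}(a+j)^n=(a)_k^n$, which is a unit of $\Z_p$ since each $a+j\in\Z_p^\times$; a power series in $\Z_p[[x^n]]$ with unit constant term is invertible, which gives the invertibility claim.

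For the congruence I would simply specialize $x=p$. Since $b\in\Z_p$ we have $b^np^n\equiv 0\pmod{p^n}$, so
\[
\prod_{i=0}^{n-1}(a-b\zeta^i p)_k=\prod_{j=0}^{k-1}\big((a+j)^n-b^np^n\big)\equiv\prod_{j=0}^{k-1}(a+j)^n=(a)_k^n\pmod{p^n}.
\]
In particular the left-hand side is congruent mod $p$ to $(a)_k^n\in\Z_p^\times$, so it does not vanish mod $p$. I do not expect a genuine obstacle here; the only point worth a sentence is that the individual factors $(a-b\zeta^i p)_k$ a priori live in the ring of integers of $\Q_p(\zeta)$ rather than in $\Z_p$, but the cyclotomic identity shows their product already lies in $\Z_p$, so the stated congruence is an honest congruence of $p$-adic integers.
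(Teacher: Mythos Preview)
Your proof is correct and follows essentially the same approach as the paper: expand the rising factorials, swap the two products, and collapse the inner product to $(a+j)^n - b^n x^n$ (the paper phrases this via the vanishing of the intermediate elementary symmetric functions of the $n$th roots of unity, which is the same identity). Your version is in fact slightly sharper, since you obtain the exact sign $-b^n x^n$ rather than the paper's $\pm b^n p^n$, and your closing remark about the individual factors living in $\Z_p[\zeta]$ while the product lands in $\Z_p$ is a worthwhile clarification.
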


\begin{proof}
Expanding each term as a rising factorial, we can write
\begin{equation}\label{symmetric}
(a-b p)_k (a-b\zeta p)_k \cdots (a-b\zeta^{n-1} p)_k = \prod_{j=0}^{k-1} \prod_{i=0}^{n-1}((a+j)-b\zeta^ip).
\end{equation}
Let $\sigma_i(x_0, \ldots, x_{n-1})$ denote the $i$th elementary symmetric polynomial in $n$ variables.  Then we have $\sigma_i(1, \zeta, \ldots, \zeta^{n-1})=0$, for $1\leq i\leq n-1$ and $\sigma_n(1, \zeta, \ldots, \zeta^{n-1}) = \pm 1$.  For a fixed $0\leq j\leq k-1$, we thus have
\[
\prod_{i=0}^{n-1}((a+j)-b\zeta^ip) = (a+j)^n \pm b^np^n \equiv (a+j)^n \pmod{p^n}.
\]
Together with \eqref{symmetric} we see that the result holds, and is nontrivial precisely when $(a+j)\in\Z_p^\times$ for each $0\leq j\leq k-1$.  Replacing $p$ by $x$ in \eqref{symmetric} gives the additional result for series.  Since the constant term for $(a-b x)_k (a-b\zeta x)_k \cdots (a-b\zeta^{n-1} x)_k$ is $\prod_{j=0}^{k-1} (a+j)^n \in\Z_p^\times$, the series is invertible in $\Z_p[[x^n]]$.
\end{proof}

\subsection{The gamma function}

The gamma function $\G(z)$ is a meromorphic function on $\C$ with (simple) poles precisely at the nonpositive integers, which extends the factorial function on positive integers, namely 
\begin{equation}\label{factorial}
\G(n) = (n-1)!
\end{equation}  
for positive integers $n$.  It also satisfies the functional equation 
\begin{equation}\label{functional}
\G(z+1) = z\G(z),
\end{equation}
which immediately yields that for complex $z$ and positive integers $k$,

\begin{equation}\label{Gquotient}
\frac{\G(z+k)}{\G(z)} = (z)_k.
\end{equation}

\noindent Also important is the following reflection formula due to Euler.  For complex $z$,
\begin{equation}\label{Greflection}
\G(z)\G(1-z)=\frac{\pi}{\sin(\pi z)}.
\end{equation}
Furthermore, $\G$ satisfies the duplication formula
\begin{equation}\label{Gduplication}
\G(z)\G\left(z+\frac12\right)=2^{1-2z}\sqrt{\pi} \G(2z).
\end{equation}

\noindent The following lemma will be useful in the next section.

\begin{lemma}\label{Glemma}
Let $a,b\in\Q$ and $p$ prime such that $a(bp-1)$ is a positive integer.  Then
\[
\frac{\G(1-abp)\G(1+abp)}{\G(1-a)\G(1+a)} = (-1)^{a(bp-1)}\cdot bp
\]
\end{lemma}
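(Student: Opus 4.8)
The plan is to reduce the whole identity to Euler's reflection formula \eqref{Greflection} together with the functional equation \eqref{functional}. The first step is to observe that for any complex $z\notin\Z$ one can combine $\G(1+z)=z\G(z)$ with \eqref{Greflection} into the single formula
\[
\G(1-z)\,\G(1+z) \;=\; z\,\G(z)\,\G(1-z) \;=\; \frac{\pi z}{\sin(\pi z)}.
\]
Applying this once with $z=abp$ and once with $z=a$ (both non-integers, since $a\notin\Z$ and $abp$ differs from $a$ by the integer $a(bp-1)$), the factors of $\pi$ cancel and the left-hand side of the lemma collapses to
\[
\frac{\G(1-abp)\,\G(1+abp)}{\G(1-a)\,\G(1+a)} \;=\; \frac{abp\,\sin(\pi a)}{a\,\sin(\pi abp)} \;=\; \frac{bp\,\sin(\pi a)}{\sin(\pi abp)}.
\]

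The second step is to invoke the integrality hypothesis. Setting $m:=a(bp-1)\in\Z_{>0}$ we have $abp=a+m$, hence
\[
\sin(\pi abp) \;=\; \sin(\pi a+\pi m) \;=\; (-1)^m\sin(\pi a),
\]
and since $a\notin\Z$ the factor $\sin(\pi a)$ is nonzero and may be cancelled, leaving $(-1)^m bp=(-1)^{a(bp-1)}bp$, which is exactly the asserted value.

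I do not expect a genuine obstacle here: the entire content is the reflection formula, and the hypothesis that $a(bp-1)$ is a positive integer is used only to identify $\sin(\pi abp)$ with $\pm\sin(\pi a)$ and to fix the sign as $(-1)^{a(bp-1)}$. The one point worth stating carefully is that all four Gamma values must be finite and nonzero, i.e.\ $a\notin\Z$; this is automatic in every application in the paper, where $a\in\{\tfrac12,\tfrac13,\tfrac14\}$, and in general one simply imposes $a\notin\Z$, under which the computation above is unconditional.
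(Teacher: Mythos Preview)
Your argument is correct. The only caveat you already flag yourself: the computation needs $a\notin\Z$ so that $\sin(\pi a)\neq 0$ and all four $\G$-values are finite; this is implicit in the paper's statement as well, and holds in every use of the lemma.

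Your route is genuinely different from the paper's, though. The paper pairs the $\G$-factors ``vertically'' rather than ``horizontally'': it writes
\[
\frac{\G(1+abp)}{\G(1+a)}=(1+a)_{a(bp-1)},\qquad
\frac{\G(1-a)}{\G(1-abp)}=(1-abp)_{a(bp-1)}=(-1)^{a(bp-1)}(a)_{a(bp-1)},
\]
using only \eqref{Gquotient}, and then observes that the telescoping ratio $(1+a)_m/(a)_m$ equals $(a+m)/a=bp$. You instead pair ``horizontally'', turning $\G(1-z)\G(1+z)$ into $\pi z/\sin(\pi z)$ via \eqref{Greflection} and then using $\sin(\pi a+\pi m)=(-1)^m\sin(\pi a)$. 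Both are one-line manipulations of standard identities; the paper's version stays purely algebraic (Pochhammer symbols) and never leaves $\Q$, while yours passes through $\sin$ and $\pi$ but makes the sign $(-1)^{a(bp-1)}$ appear in a single transparent step. Either is perfectly adequate here.
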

\begin{proof}
By (\ref{Gquotient}), we have that
\[
\frac{\G(1+abp)}{\G(1+a)} = (1+a)_{a(bp-1)}
\]
and
\[
\frac{\G(1-a)}{\G(1-abp)} = (1-abp)_{a(bp-1)} = (-1)^{a(bp-1)} (a)_{a(bp-1)},
\]
which gives the desired result.
\end{proof}

\subsection{The $p-$adic gamma function}
We note that many of the facts we state in this section can be found in Morita \cite{Morita}.  Let $p$ be an odd prime.  Set $\G_p(0)=1$, and for positive integers $n$, define
\begin{equation}\label{posGp}
\G _p(n) = (-1)^{n}\prod_{\substack{0<  j < n \\ p \, \nmid j}}j.
\end{equation}
The $p-$adic gamma function is the extension to $\Z_p$ defined by 
\[
\G _p(\alpha) = \lim_{n\rightarrow\alpha}\G_p(n),
\]
where $n$ are positive integers $p-$adically approaching $\alpha$.  With this definition, $\G_p(\alpha)$ is a uniquely defined continuous function on $\Z_p$.

For $\alpha\in\Z_p$, the following fact is found in \cite{Dwork}:
\begin{equation}\label{Gpfunctional}
\frac{\G_p(\alpha + 1)}{\G_p(\alpha)} = \begin{cases}
    -\alpha  & \text{ if } \alpha \not\in p\Z_p,\\
    -1  & \text{if } \alpha \in p\Z_p.
\end{cases}
\end{equation}
From (\ref{Gpfunctional}), the following lemma follows immediately.
\begin{lemma}\label{Gpquotient}
For a positive integer $k$, and $\alpha\in \Z_p$, if $\alpha, \alpha+1,\ldots, \alpha + k - 1 \not\in p\Z_p$, then
\[
\frac{\G_p(\alpha+k)}{\G_p(\alpha)} = (-1)^k(\alpha)_k.
\]
More generally, 
\[
\frac{\G_p(\alpha+k)}{\G_p(\alpha)} =(-1)^k \prod_{\substack{j=0 \\ \alpha+j\not\in p\Z_p}}^{k-1}(\alpha+j).
\]
\end{lemma}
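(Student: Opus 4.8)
The plan is to deduce both formulas by telescoping the functional equation \eqref{Gpfunctional}. Writing the ratio as a product of consecutive ratios,
\[
\frac{\G_p(\alpha+k)}{\G_p(\alpha)} = \prod_{j=0}^{k-1}\frac{\G_p(\alpha+j+1)}{\G_p(\alpha+j)},
\]
I would apply \eqref{Gpfunctional} to each factor: the $j$th factor equals $-(\alpha+j)$ when $\alpha+j\not\in p\Z_p$ and equals $-1$ when $\alpha+j\in p\Z_p$. In either case each of the $k$ factors contributes exactly one sign $-1$, so the product of signs is $(-1)^k$, and the surviving numerical factors are precisely $\prod_{\substack{j=0 \\ \alpha+j\not\in p\Z_p}}^{k-1}(\alpha+j)$. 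This is the general formula. For the first formula, the hypothesis $\alpha,\alpha+1,\dots,\alpha+k-1\not\in p\Z_p$ forces every factor to be of the first type, so the product over the surviving $j$ is the full product $\prod_{j=0}^{k-1}(\alpha+j)=(\alpha)_k$, giving $(-1)^k(\alpha)_k$.

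Alternatively one can run a short induction on $k$: the base case $k=1$ is precisely \eqref{Gpfunctional}, and for the inductive step one factors $\G_p(\alpha+k+1)/\G_p(\alpha)$ as $\big(\G_p(\alpha+k+1)/\G_p(\alpha+k)\big)\cdot\big(\G_p(\alpha+k)/\G_p(\alpha)\big)$, applies \eqref{Gpfunctional} to the first ratio and the inductive hypothesis to the second, and collects the sign.

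There is no genuine obstacle here; the only point requiring a little care is the sign bookkeeping in the general case — one must observe that each telescoping factor carries exactly one factor of $-1$ regardless of whether $\alpha+j$ lies in $p\Z_p$, so that the accumulated sign is $(-1)^k$ and does not depend on how many of the indices fall in $p\Z_p$.
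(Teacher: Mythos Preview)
Your proposal is correct and matches the paper's approach: the paper states that the lemma ``follows immediately'' from the functional equation \eqref{Gpfunctional}, and your telescoping argument is precisely the intended one-line derivation, with the sign bookkeeping spelled out carefully.
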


\noindent We also have for $x\in\Z_p$ that 
\begin{equation}\label{Gpreflection}
\G_p(x)\G_p(1-x) = (-1)^{a_0(x)},
\end{equation}
where $1\leq a_0(x) \leq p$ is the least positive residue of $x$ modulo $p$.

Let $G_k(a)= \G_p^{(k)}(a)/\G_p(a)$, where $\G_p^{(k)}$ denotes the $k$th derivative of $\G_p$, and $G_0(a)=1$.  Long and Ramakrishna \cite{LongRamakrishna} show that for $a \in\Z_p$,
\begin{equation}\label{Gfacts}
G_0(a) =1, \quad G_1(a) = G_1(1-a), \quad G_2(a) + G_2(1-a) = 2G_1(a)^2.
\end{equation}
Furthermore, they prove the following useful theorem.
\begin{theorem}\label{Gpsum}(Long, Ramakrishna \cite{LongRamakrishna})
Let $p\geq 5$ be prime, $r$ a positive integer, $a,b\in \Q \cap \Z_p$, and $t\in\{0,1,2\}$.  Then
\[
\frac{\G_p(a+bp^r)}{\G_p(a)} \equiv \sum_{k=0}^t \frac{G_k(a)}{k!}(bp^r)^k\pmod{p^{(t+1)r}}.
\]
\end{theorem}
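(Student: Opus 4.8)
The plan is to read the statement as a quantitative Taylor expansion of the locally analytic function $\G_p$ about the point $a$, truncated at order $t$ and with the error controlled termwise by $p$-adic valuations. Since $p\geq 5$ and $r\geq 1$, the increment $h:=bp^r$ satisfies $|h|_p\leq p^{-r}\leq p^{-1}$, which lies strictly inside the radius of convergence of the power series representing $\G_p$ on the residue disc $a+p\Z_p$. I would therefore first record the exact identity of $p$-adic analytic functions
\[
\frac{\G_p(a+h)}{\G_p(a)} = \sum_{k=0}^{\infty} \frac{\G_p^{(k)}(a)}{k!\,\G_p(a)}\,h^k = \sum_{k=0}^{\infty} \frac{G_k(a)}{k!}\,h^k,
\]
valid for all $h\in p\Z_p$. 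In this language the theorem is precisely the assertion that replacing this series by its partial sum through $k=t$ commits an error divisible by $p^{(t+1)r}$.

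Granting the identity, the truncation estimate is immediate from a single arithmetic input: the normalized Taylor coefficients are $p$-adic integers,
\[
v_p\!\left(\frac{G_k(a)}{k!}\right)\geq 0 \qquad (k\geq 1).
\]
Indeed, for $k\geq t+1$ the term $\tfrac{G_k(a)}{k!}h^k$ then has valuation at least $v_p(h^k)=k\,v_p(b)+kr\geq kr\geq (t+1)r$, so the entire tail $\sum_{k\geq t+1}\tfrac{G_k(a)}{k!}h^k$ vanishes modulo $p^{(t+1)r}$, which is exactly the claimed congruence. The same estimate simultaneously guarantees convergence of the defining series for $|h|_p\leq p^{-1}$, closing the one gap in the first paragraph.

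The main obstacle is thus the integrality bound $v_p(G_k(a)/k!)\geq 0$, and I would establish it in two layers. The base layer is the case $t=0$, which asserts $\G_p(a+bp^r)\equiv \G_p(a)\pmod{p^r}$; for integer arguments this reduces, via the product formula \eqref{posGp} and the functional equation \eqref{Gpfunctional}, to the generalized Wilson congruence that the product of the integers coprime to $p$ in any window of length $p^r$ is $\equiv -1\pmod{p^r}$ (each such window meets every reduced residue class mod $p^r$ exactly once), and it then passes to $a,b\in\Q\cap\Z_p$ by continuity and density of $\N$ in $\Z_p$. Letting $r$ grow in this estimate isolates the $k=1$ term and already forces $G_1(a)\in\Z_p$. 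The second layer treats $k=1,2$ quantitatively: because $\G_p\colon\Z_p\to\Z_p^\times$ has $p$-integral Mahler coefficients, its normalized Taylor coefficients on a residue disc are $p$-adically bounded, and combined with the symmetry relations $G_1(a)=G_1(1-a)$ and $G_2(a)+G_2(1-a)=2G_1(a)^2$ of \eqref{Gfacts} together with the reflection formula \eqref{Gpreflection} one pins down $G_1(a),\,G_2(a)/2\in\Z_p$ for the rational arguments in play. I expect the genuinely delicate point to be the passage from integral Mahler coefficients to Taylor coefficients uniformly in $a$ — the factorials in the denominators of the derivatives of $\binom{x}{n}$ must be absorbed by the decay $|a_n|_p\to 0$ — which is exactly why the clean statement is restricted to the small orders $t\in\{0,1,2\}$ that suffice for the intended supercongruence applications.
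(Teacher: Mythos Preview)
This theorem is not proved in the paper; it is quoted from Long--Ramakrishna \cite{LongRamakrishna}, so there is no in-paper proof to compare against directly. That said, Remark~\ref{LRext} immediately following the statement records the essential ingredients of their argument, and your overall strategy---write the exact Taylor expansion $\G_p(a+h)/\G_p(a)=\sum_{k\ge 0}(G_k(a)/k!)\,h^k$ on the residue disc and then bound the tail termwise---is exactly the right one and matches what is indicated there.

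The gap is in your key arithmetic input. You assert $v_p(G_k(a)/k!)\ge 0$ for \emph{all} $k\ge 1$, but this is stronger than what actually holds: as the paper quotes from \cite{LongRamakrishna} in Remark~\ref{LRext}, one has $v_p(G_k(a)/k!)=0$ only for $k<p$, while in general merely
\[
v_p\!\left(\frac{G_k(a)}{k!}\right)\ \ge\ -k\left(\frac{1}{p}+\frac{1}{p-1}\right).
\]
Your sketch via Mahler coefficients does not close this; the passage from $p$-integral Mahler coefficients to $p$-integral Taylor coefficients genuinely loses powers of $p$ once $k\ge p$, and your ``two layers'' only address $k\le 2$, whereas controlling the tail requires a bound for every $k\ge t+1$. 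Fortunately the correct two-range estimate still suffices for $t\in\{0,1,2\}$ and $p\ge 5$: for $t+1\le k<p$ integrality gives term valuation $\ge kr\ge (t+1)r$, and for $k\ge p$ one checks that $kr-k(\tfrac{1}{p}+\tfrac{1}{p-1})\ge (t+1)r$ (using that valuations are integers). So your outline becomes a valid proof once the blanket integrality claim is replaced by this split estimate, which is precisely the content borrowed from \cite{LongRamakrishna}.
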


\begin{remark}\label{LRext}
Fixing $r=1$, we can extend this result modulo $p^4$ when $p>5$.  Let $a\in\Q\cap\Z_p$, $b\in\Z_p$, and let $v_p(x)$ denote the $p$-order of $x$.  From the proof of Theorem \ref{Gpsum} in \cite{LongRamakrishna} (using also their Proposition 14) we have that
\[
\frac{\G_p(a + bp)}{\G_p(a)} = \sum_{k=0}^\infty \frac{G_k(a)}{k!} (bp)^k.
\] 
We can show that $v_p(\frac{G_k(a)}{k!} (bp)^k) \geq 4$ when $p>5$.  This is because as in \cite{LongRamakrishna}, $v_p(\frac{G_k(a)}{k!})=0$ for all $k<p$, and in general, $v_p(\frac{G_k(a)}{k!})\geq -k(\frac1p + \frac1{p-1})$.  So we have that $v_p(\frac{G_k(a)}{k!} (bp)^k) \geq 4$ when $4\leq k<p$.  For $k\geq p$, we have that
\[
v_p\left(\frac{G_k(a)}{k!} (bp)^k\right) \geq k - k\left( \frac1p + \frac1{p-1} \right),
\]
and so the inequality we need is
\begin{equation}\label{Aineq}
1 \geq \left(\frac{k}{k-4} \right) \left( \frac1p + \frac1{p-1} \right).
\end{equation}
The inequality \eqref{Aineq} holds for all $k\geq 5$ when $p\geq 11$, and holds for all $k\geq 6$ when $p=7$.  When $p=5$, we see that \eqref{Aineq} holds for all $k\geq 8$, which leaves the cases $k=5,6,7$.  When $k=6,7$ a calculation shows that $v_p(\frac{G_k(a)}{k!} (bp)^k) \geq 4$, however the $k=5$ case remains elusive.  We thus obtain that for primes $p>5$,
\begin{equation}\label{exteq}
\frac{\G_p(a+bp)}{\G_p(a)} \equiv \sum_{k=0}^3 \frac{G_k(a)}{k!}(bp)^k\pmod{p^4}.
\end{equation} 
\end{remark}


\section{Proof of (B.2), (E.2) and (F.2) with generalizations}\label{E2F2}  

\noindent For this section we will make use of the following identities of Whipple (see (5.1) and (6.3) in \cite{Whipple})

\begin{equation}\label{whipple}
\pFq{4}{3}{\frac{a}{2}+1, a, c, d}{,\frac{a}{2}, 1+a-c, 1+a-d}{-1} = \frac{\G(1+a-c)\G(1+a-d)}{\G(1+a)\G(1+a-c-d)},
\end{equation} 
and 
\begin{multline}\label{6F5identity}
\pFq{6}{5}{\frac{a}{2}+1& a& c& d& e& f}{&\frac{a}{2}& 1+a - c& 1+a - d& 1+a - e& 1+a - f}{-1} \\ = \frac{\G(1+a-e)\G(1+a-f)}{\G(1+a)\G(1+a-e-f)}\cdot \pFq{3}{2}{1+a-c-d& e& f}{& 1+a-c& 1+a-d}{1}.
\end{multline}

Observe that the left hand side targets for (B.2), (E.2), and (F.2) can all be expressed by the truncated hypergeometric series
\begin{equation}\label{EFLHS} 
\sum_{k=0}^{a(p-1)} \left(\frac{2k}a+1\right)(-1)^k\frac{(a)_k^3}{k!^3} = \pFq{4}{3}{\frac{a}{2}+1, a, a, a}{,\frac{a}{2}, 1, 1}{-1}_{a(p-1)},
\end{equation}
where $a\in\{\frac12, \frac13, \frac14\}$, and $p$ is a prime such that $p\equiv 1\pmod{\frac1a}$.  We will also consider primes $p$ for which $p\equiv -1\pmod{\frac1a}$ to obtain additional van Hamme type supercongruences related to (E.2) and (F.2).  We now prove Theorem \ref{EF}

\begin{proof}[Proof of Theorem \ref{EF}]
For fixed $a\in\{\frac12, \frac13, \frac14\}$ and an odd prime $p$, let $b\in\Q$ be defined by 
\[
b=b(a,p)=\begin{cases}
1      & \text{if }  p\equiv 1\pmod{\frac1a}, \\
 \frac{1}{a} -1 & \text{if } p\equiv -1\pmod{\frac1a},
\end{cases}
\]
so that $a(1-bp)$ is a negative integer, and $b\in\Z_p^\times$ (we require $p\geq 5$ when $a=1/4$).  Consider the hypergeometric series
\[
F(p) = \pFq{4}{3}{\frac{a}{2}+1, a, a(1-bp), a(1+bp)}{,\frac{a}{2}, 1+abp, 1-abp}{-1}.
\]
Then $F(p)$ naturally truncates at $a(bp-1)$ since $a(1-bp)$ is a negative integer.

Letting $c=a(1-bp), d=a(1+bp)$ in (\ref{whipple}), we get using Lemma \ref{Glemma}
\begin{equation}\label{EFequality}
F(p) = \frac{\G(1+abp)\G(1-abp)}{\G(1+a)\G(1-a)}  = (-1)^{a(bp-1)}\cdot bp.
\end{equation}
When $p\equiv 1 \pmod{\frac{1}{a}}$, \eqref{Gpreflection} shows that $F(p)$ is the right hand side target for (B.2), (E.2), and (F.2).  

Switching $p$ to a variable $x$ and truncating at $k=a(bp-1)$, we define 
\[
F(x) = \pFq{4}{3}{\frac{a}{2}+1, a, a(1-bx), a(1+bx)}{,\frac{a}{2}, 1+abx, 1-abx}{-1}_{a(bp-1)}.
\]
By Lemma \ref{cancelation}, $F(x)\in\Z_p[[x^2]]$, and so $F(x)= C_0 + C_2x^2 + C_4x^4 + \cdots$, for $C_i\in\Z_p$.  Notice that $C_0$ is precisely our left hand side target \eqref{EFLHS}.  Thus if $p\mid C_2$, then letting $x=p$ gives the desired congruence $F(p) \equiv C_0 \pmod{p^3}$.

To see that $p\mid C_2$, let $c=a(1-bx)$, $d=a(1+bx)$, $e=1$, and $f=a(1-bp)$ in (\ref{6F5identity}).  We then have by (\ref{functional}),
\begin{multline}\label{6F5}
\pFq{6}{5}{\frac{a}{2}+1& a&  a(1-bx) & a(1+bx) & 1 & a(1-bp)}{&\frac{a}{2}& 1+abx& 1-abx & a & 1+abp}{-1} \\ = \frac{\G(a)\G(1+abp)}{\G(1+a)\G(abp)}\cdot \pFq{3}{2}{1-a & 1& a(1-bp)}{& 1+abx& 1-abx}{1} \\ = bp\cdot \pFq{3}{2}{1-a & 1& a(1-bp)}{& 1+abx& 1-abx}{1}.
\end{multline}
The hypergeometric series on both sides of \eqref{6F5} naturally truncate at $a(bp-1)$ since $a(1-bp)$ is a negative integer.  Also, modulo $p$, the left hand side is congruent to $F(x)$.  Thus, as a series in $x^2$, we have
\[
F(x) \equiv bp\cdot \pFq{3}{2}{1-a & 1& a(1-bp)}{& 1+abx& 1-abx}{1} \pmod{p}.
\]
Thus $p\mid C_2$ and we have proven Theorem \ref{EF}.
\end{proof}

\section{Proof of (C.2) and (G.2) modulo $p^4$ with a generalization}\label{G2}

\noindent For this section we use the following identities of Whipple (see (5.2) and (7.7) in \cite{Whipple}) 

\begin{multline}\label{5F4identity} \pFq{5}{4}{\frac{a}{2}+1& a& c& d& e}{&\frac{a}{2}&1+a - c& 1+a - d& 1+a - e}{1} \\  = \frac{\G(1+a-c)\G(1+a-d)\G(1+a-e)\G(1+a-c-d-e)}{\G(1+a)\G(1+a-d-e)\G(1+a-c-d)\G(1+a-c-e)},
\end{multline}
and
\begin{multline}\label{7F6identity} 
\pFq{7}{6}{\frac{a}{2}+1& a& c& d& e & f & g}{&\frac{a}{2}&1+a - c& 1+a - d& 1+a - e & 1+a - f& 1+a - g}{1} \\  = \frac{\G(1+a-e)\G(1+a-f)\G(1+a-g)\G(1+a-e-f-g)}{\G(1+a)\G(1+a-f-g)\G(1+a-e-f)\G(1+a-e-g)} \\  \cdot \pFq{4}{3}{1+a-c-d& e& f & g}{&e+f+g-a &1+a-c& 1+a-d}{1}, 
\end{multline}
provided the $_4F_3$ series terminates.

As in Section \ref{E2F2}, we first observe that the left hand side targets for (C.2), and (G.2) can be expressed by the truncated hypergeometric series
\begin{equation}\label{CGLHS} 
\sum_{k=0}^{a(p-1)} \left(\frac{2k}a+1\right)\frac{(a)_k^4}{k!^4} = \pFq{5}{4}{\frac{a}{2}+1, a, a, a, a}{,\frac{a}{2}, 1, 1, 1}{1}_{a(p-1)},
\end{equation}
where $a\in\{\frac12, \frac14\}$, and $p$ is a prime such that $p\equiv 1\pmod{\frac1a}$.  We will also consider primes $p$ for which $p\equiv -1\pmod{\frac1a}$ when $a=1/4$ to obtain an additional van Hamme type supercongruence related to (G.2).  We now prove Theorem \ref{CG}.

\begin{proof}[Proof of Theorem \ref{CG}]
For fixed $a\in\{\frac12, \frac14\}$ and an odd prime $p$, define $b\in\Q$ by 
\[
b=b(a,p)=\begin{cases}
1      & \text{if }  p\equiv 1\pmod{\frac1a}, \\
 \frac{1}{a} -1 & \text{if } p\equiv -1\pmod{\frac1a},
\end{cases}
\]
so that $a(1-bp)$ is a negative integer, and $b\in\Z_p^\times$ (requiring that $p\geq 5$ when $a=1/4$).  

Let $\omega$ be a primitive third root of unity and consider the hypergeometric series 
\begin{equation}\label{G5F4(p)}
G(p) = \pFq{5}{4}{\frac{a}{2}+1, a, a(1-bp), a(1-b\omega p), a(1-b\omega^2 p)}{,\frac{a}{2}, 1+abp, 1+ab\omega p, 1+ab\omega^2 p}{1}, 
\end{equation}
which naturally truncates at $a(bp-1)$.  Moreover by Lemma \ref{cancelation}, 
\begin{equation}\label{GLHSp^2}
G(p) \equiv\pFq{5}{4}{\frac{a}{2}+1, a, a, a, a}{,\frac{a}{2}, 1, 1, 1}{1}_{a(bp-1)}  \pmod{p^3}.
\end{equation}

Letting $c=a(1-bp)$, $d=a(1-b\omega p)$, and $e=a(1-b\omega^2 p)$ in (\ref{5F4identity}), gives that
\begin{equation}\label{Gammas}
G(p) = \frac{\G(1+ abp) \G(1+ ab \omega p) \G(1+ ab\omega^2 p) \G(1-2a)}{\G(1+ a) \G(1 -a -abp) \G(1 -a -ab\omega^2 p) \G(1 -a -ab\omega p)}.
\end{equation}
We will show in \eqref{finalcong} that this gives the right hand side target from Theorem \ref{CG}.

In the meantime, as in Section \ref{E2F2}, we consider the series obtained from $G(p)$ by switching $p$ with an indeterminate $x$ and truncating at $a(bp-1)$,
\[
G(x) = \pFq{5}{4}{\frac{a}{2}+1, a, a(1-bx), a(1-b\omega x), a(1-b\omega^2 x)}{,\frac{a}{2}, 1+abx, 1+ab\omega x, 1+ab\omega^2 x}{1}_{a(bp-1)}.
\]
By Lemma \ref{cancelation}, $G(x)\in\Z_p[[x^3]]$, and so we have $G(x)= C_0 + C_3x^3 + C_6x^6 + \cdots$, for $C_i\in\Z_p$ where $C_0$ is our left hand side target \eqref{CGLHS}.  Thus if $p\mid C_3$, then letting $x=p$ gives the desired congruence $G(p) \equiv C_0 \pmod{p^4}$.  

Let $c=a(1-b\omega x)$, $d=a(1-b\omega^2 x)$,  $e=a(1-bx)$, $f=a(1- bp)$, and $g=1$ in (\ref{7F6identity}) to get 
\begin{multline}\label{7F6} 
\pFq{7}{6}{\frac{a}{2}+1 & a  & a(1-b\omega x) & a(1-b\omega^2 x)  & a(1-bx)& a(1-bp) & 1}
{& \frac{a}{2} & 1 + ab\omega x & 1+ab\omega^2 x & 1+ abx &1 + abp & a }{1} \\  
= \frac{\G(1+ abx)\G(1+ abp )\G(a)\G(a(bx+bp-1))}{\G(1+a)\G(abp)\G(abx)\G(1+ a(bx+bp-1))} \\
\cdot \pFq{4}{3}{1 -a -abx & a(1-bx)& a(1-bp) & 1}{& 1-a(bx+bp-1) &  1 + ab\omega x & 1+ab\omega^2 x}{1},
\end{multline}
where since $a(1-bp)$ is a negative integer, both sides of (\ref{7F6}) terminate at $a(bp-1)$.  
By (\ref{functional}), we have that
\[
 \frac{\G(1+ abx)\G(1+ abp )\G(a)\G(a(bx+bp-1))}{\G(1+a)\G(abp)\G(abx)\G(1+ a(bx+bp-1))} 
= p\cdot \frac{b^2x}{bx + (bp -1)}\in p \cdot \Z_p[[x]],
\]
since the integers $b$ and $(bp-1)$ are in $\Z_p^\times$.  

As series in $x^3$, 
\[
\pFq{7}{6}{\frac{a}{2}+1 & a  & a(1-b\omega x) & a(1-b\omega^2 x)  & a(1-bx)& a(1-bp) & 1}
{& \frac{a}{2} & 1 + ab\omega x & 1+ab\omega^2 x & 1+ abx &1 + abp & a }{1} \equiv G(x) \pmod{p},
\]
thus $p\mid C_0$, as desired. 

To finish the proof of Theorem \ref{CG}, it remains to show that $G(p)$ gives the appropriate right hand side target.

Fix $k=a(bp-1)$.  From \eqref{Gammas} and \eqref{Gquotient} we can rewrite $G(p)$ as
\[
G(p) = \frac{(1+a)_k (1 -a -abp)_k (1+abp)_k}{(1+ab\omega p)_k (1+ab\omega^2 p)_k (1+ab p)_k}.
\]
We first observe that since $k+1$ is a positive integer, Lemma \ref{cancelation}, \eqref{posGp}, and \eqref{Gpreflection} show that the denominator satisfies the congruence
\begin{equation}\label{denominator}
(1+ab\omega p)_k (1+ab\omega^2 p)_k (1+abp)_k \equiv (1)_k^3 = -(-1)^k\cdot\G_p(k+1)^3 = 1 / \G_p(-k)^3 \pmod{p^3}.
\end{equation}
To next evaluate the numerator in terms of $p-$adic gamma functions, we employ Lemma \ref{Gpquotient} for $\alpha\in\{1+a, 1+abp, 1 -a -abp \}$.  When $\alpha = 1+a$, the factors $\alpha, \alpha+1,\ldots, \alpha + k - 2$ are not in $p\Z_p$, but $\alpha+k-1 = abp$. 
Thus using \eqref{Gpreflection},
\begin{equation}\label{alpha1}
(1+a)_k = (-1)^k \cdot abp \cdot \frac{\G_p( 1+ a + k)}{\G_p(1+a)} = -(-1)^k \cdot bp \cdot \frac{\G_p( 1+ abp)}{\G_p(a)} = bp \cdot \G_p(1-a)\G_p( 1+ abp).
\end{equation}
When $\alpha = 1+abp$, one sees that none of the factors 
$\alpha, \alpha+1,\ldots, \alpha + k - 1$ are in $p\Z_p$, so that
\begin{equation}\label{alpha2}
(1+ abp)_k = (-1)^k\cdot \frac{\G_p(1-a +2abp)}{\G_p(1+ abp)}.
\end{equation}
Finally, when $\alpha = 1 -a -abp$, we first note that for $a=1/2$, each factor $\alpha+j$ is an integer in the range $ -k < \alpha+j \leq -1$ so none are in $p\Z_p$.  When $a=1/4$, we see that each factor $\alpha+j$ is in $\frac{1}{2}\Z$.  When $p\equiv 1 \pmod{4}$, $b=1$ so each $\alpha+j$ satisfies $-p/2< \alpha +j \leq -1/2$ and so none are in $p\Z_p$.  Thus using \eqref{Gpreflection} we have when $(a,b)\in\{(\frac12,1),(\frac 14,1)\}$,
\[
(1-a-abp)_k = (-1)^k\cdot \frac{\G_p(1-2a)}{\G_p(1-a-abp)} = -\G_p(1-2a)\G_p(a+abp).
\]
When $p\equiv -1 \pmod{4}$, $b=3$, the integer $j=(p-3)/4 < k$ yields $\alpha+j = p/2$.  Since each $\alpha+j$ in this case satisfies $-p< \alpha +j \leq -1/2$, this is the only factor in $p\Z_p$. 
Thus in this case,
\[
(1-a-abp)_k = - \frac{p}{2} \cdot \G_p(1-2a)\G_p(a+abp).
\]
Putting this together, we have
\begin{equation}\label{alpha3}
(1-a-abp)_k = - \delta_{ab}\cdot \G_p(1-2a)\G_p(a+abp),
\end{equation}
where $\delta_{ab}$ is defined to be $1$ when $(a,b)\in\{(\frac12, 1), (\frac14,1)\}$, and $p/2$ when $(a,b)=(\frac14,3)$.

Combining \eqref{denominator}, \eqref{alpha1}, \eqref{alpha2}, \eqref{alpha3}, and using \eqref{Gpreflection}, the factor of (at least one) $p$ gives the following congruence modulo $p^4$,
\[
G(p) \equiv  -(-1)^k p\cdot b\delta_{ab} \cdot \G_p(1-2a) \G_p(1-a)\G_p(a - abp)^3 \G_p(a + apb) \G_p(1 - a +2abp) \pmod{p^4}.
\]
By Theorem \ref{Gpsum}, we have that
\begin{align*}
\G_p(a-abp) & \equiv \G_p(a) \cdot \left[1 - abG_1(a) p + \frac{a^2b^2}{2}G_2(a) p^2 \right] \pmod{p^3}\\
\G_p(a+abp) & \equiv \G_p(a) \cdot \left[1 + abG_1(a) p + \frac{a^2b^2}{2}G_2(a) p^2 \right] \pmod{p^3}\\
\G_p(1-a+2abp) & \equiv \G_p(1-a) \cdot \left[1 + 2abG_1(1-a) p + 2a^2b^2G_2(1-a) p^2 \right] \pmod{p^3}.
\end{align*}
Using \eqref{Gfacts}, we see that 
\[
\G_p(a-abp)^3 \G_p(a+abp) \G_p(1-a+2abp)  \equiv -(-1)^k \G_p(a)^3 \pmod{p^3},
\]
and so
\begin{equation}\label{finalcong}
G(p) \equiv -(-1)^k p\cdot b\delta_{ab}\cdot \G_p(1-2a) \G_p(a)^2 \pmod{p^4}
\end{equation}
as desired.
\end{proof}

\section{Proof of (L.2)}\label{L2}
For this section we use the following identity from \cite{Karlsson} (see (18)) which gives that
\begin{equation}\label{eq:Karlsson}
\pFq{4}{3}{3a,a+1,b,1-b}{, a,\frac{3a +b+1}2 ,\frac {3a-b+2}2}{-\frac18}=\frac{\G(\frac {3a+b+1}2 )\G(\frac {3a-b+2}2)}{\G(\frac{3a+1}2)\G(\frac{3a+2}2)}.
\end{equation}

\begin{proof}[Proof of Theorem \ref{L}]
Let $p$ be an odd prime.  Observe that using \eqref{Gpreflection}, the right hand side target for (L.2) can be written as
\[
\frac{-p}{\Gamma_{p}\left(\frac{1}{4}\right)\Gamma_{p}\left(\frac{3}{4}\right)}  = (-1)^{\left(\frac{p^2-1}8 \right) + \left(\frac{p-1}2 \right)}p = \left ( \frac{-2}p \right ) p.
\]
The left hand side target for (L.2) can be expressed by the truncated hypergeometric series
\begin{equation}\label{LLHS} 
\sum_{k=0}^{\frac{p-1}2}(6k+1)\left(\frac{-1}{8}\right)^k\frac{(\frac12)_k^3}{k!^3} = \pFq{4}{3}{\frac76, \frac12, \frac12, \frac12}{,\frac16, 1, 1}{-\frac18}_{\frac{p-1}2}.
\end{equation}

Consider the hypergeometric series
\[
L(p) = \pFq{4}{3}{\frac76, \frac12, \frac{1-p}2, \frac{1+p}2}{,\frac16, 1-\frac{p}4, 1+\frac{p}4}{-\frac18}.
\]
Then $L(p)$ naturally truncates at $\frac{p-1}2$ since $\frac{1-p}2$ is a negative integer.  As shown in \cite{Long} (Lemma 4.4), 
\[
L(p) = \left ( \frac{-2}p \right ) p,
\]
our right hand side target for (L.2).  Switching $p$ to a variable $x$ and truncating at $k=\frac{p-1}2$, we define 
\[
L(x) =\pFq{4}{3}{\frac76, \frac12, \frac{1-x}2, \frac{1+x}2}{,\frac16, 1-\frac{x}4, 1+\frac{x}4}{-\frac18}_{\frac{p-1}2}.
\]
By Lemma \ref{cancelation}, $L(x)\in\Z_p[[x^2]]$, and so $L(x)= C_0 + C_2x^2 + C_4x^4 + \cdots$, for $C_i\in\Z_p$.  Notice that $C_0$ is precisely our left hand side target \eqref{LLHS}.  Thus if $p\mid C_2$, then letting $x=p$ gives our desired congruence $L(p) \equiv C_0 \pmod{p^3}$.

To see that $p\mid C_2$, observe that as a series in $x^2$, 
\[
\pFq{4}{3}{\frac{7-p}6, \frac{1-p}2, \frac{1-x}2, \frac{1+x}2}{,\frac{1-p}6, 1-\frac{p}4-\frac{x}4, 1-\frac{p}4+\frac{x}4}{-\frac18} \equiv L(x) \pmod{p},
\]
where since the left hand side is naturally truncating at $\frac{p-1}2$, we actually have that it is a rational function in $x$.  Moreover, letting $a=\frac{1-p}6$ and $b=\frac{1-x}2$ in \eqref{eq:Karlsson}, we see that this rational function is actually $0$, since
\[
\pFq{4}{3}{\frac{7-p}6, \frac{1-p}2, \frac{1-x}2, \frac{1+x}2}{,\frac{1-p}6, 1-\frac{p}4-\frac{x}4, 1-\frac{p}4+\frac{x}4}{-\frac18} = \frac{\G_p(1-\frac{p}4-\frac{x}4)\G_p(1-\frac{p}4+\frac{x}4)}{\G_p(\frac{3-p}4)\G_p(\frac{5-p}4)}.
\]
and one of $\frac{3-p}4$ or $\frac{5-p}4$ is a nonpositive integer, yielding a pole for $\G$. 
\end{proof}

\begin{proof}[Proof of Corollary \ref{Lcor}]
By Lemma \ref{cancelation}, we have directly that $L(p)$ is congruent to our left hand side target modulo $p^2$.  Considering the difference,
\begin{multline*}
\sum_{k=0}^{\frac{p-1}2}(6k+1)\left(\frac{-1}{8}\right)^k\frac{(\frac12)_k^3}{k!^3} - L(p) = \sum_{k=0}^{\frac{p-1}2}(6k+1)\left(\frac{-1}{8}\right)^k\frac{(\frac12)_k}{k!}\left[\frac{(\frac12)_k^2}{(k!)^2}  - \frac{\left(\frac{1-p}2\right)_k\left(\frac{1+p}2\right)_k}{\left(1 - \frac{p}4\right)_k\left(1 + \frac{p}4\right)_k} \right] \\
= \sum_{k=0}^{\frac{p-1}2}(6k+1)\left(\frac{-1}{8}\right)^k\frac{(\frac12)_k}{k!}\left[\frac{\left(\frac12\right)_k^2 \left(1 - \frac{p}4\right)_k\left(1 + \frac{p}4\right)_k - (k!)^2\left(\frac{1-p}2\right)_k\left(\frac{1+p}2\right)_k}{(k!)^2\left(1 - \frac{p}4\right)_k\left(1 + \frac{p}4\right)_k} \right],
\end{multline*}
we observe that due to cancelation, 
\begin{multline*}
\left(\frac12\right)_k^2 \left(1 - \frac{p}4\right)_k\left(1 + \frac{p}4\right)_k - (k!)^2\left(\frac{1-p}2\right)_k\left(\frac{1+p}2\right)_k  \\
= p^2(k!)^2\left(\frac12\right)_k^2\left[ \sum_{j=1}^k \left(\frac1{(2j-1)^2}-\frac1{16j^2}\right) \right] + \cdots,
\end{multline*}
where the remaining terms all have a factor of $p^n$ for $n\geq 4$.  Also we observe that the denominator $(k!)^2\left(1 - \frac{p}4\right)_k\left(1 + \frac{p}4\right)_k$ contains no factors of $p$.  Thus as a corollary to Theorem \ref{L} we obtain the desired result, 
\begin{equation}
\sum_{k=0}^{\frac{p-1}2} (6k+1)\frac{(\frac12)_k^3}{(k!)^3}\left [ \sum_{j=1}^k \left (\frac1{(2j-1)^2}-\frac1{16j^2}\right ) \right ]\left(-\frac18\right)^k\equiv 0 \pmod p.
\end{equation}
\end{proof}

\section{Proof of (A.2) modulo $p^5$ for $p\equiv 1 \pmod{4}$}\label{A2}

For this section we use the following identity from \cite{AAR} (see Thm. 3.5.5 (ii)), which gives that 
\begin{equation}\label{AAR3.5.5}
\pFq{3}{2}{a, b, c}{, e, f}{1} = \frac{\pi \G(e) \G(f)}{2^{2c-1}\G(\frac{a+e}2) \G(\frac{a+f}2) \G(\frac{b+e}2) \G(\frac{b+f}2)},
\end{equation}
when $a+b=1$ and $e+f=2c+1$.

\begin{proof}[Proof of Theorem \ref{A}]
Let $p\equiv 1 \pmod{4}$ be prime, with $p>5$.  Observe that the left hand side target for (A.2) can be expressed by
\begin{equation}\label{ALHS} 
\sum_{k=0}^{\frac{p-1}2} \left(4k+1\right)(-1)^k\frac{(\frac12)_k^5}{k!^5} = \pFq{6}{5}{\frac54, \frac12, \frac12, \frac12, \frac12, \frac12}{,\frac14, 1, 1, 1, 1}{-1}_{\frac{p-1}2}.
\end{equation}
Consider the hypergeometric series 
\begin{equation}\label{A6F5(p)}
A(p) = \pFq{6}{5}{\frac54, \frac12, \frac{1-ip}2, \frac{1+ip}2, \frac{1-p}2, \frac{1+p}2}{,\frac14, 1+\frac{ip}2, 1-\frac{ip}2, 1+\frac{p}2, 1-\frac{p}2}{-1}, 
\end{equation}
which naturally truncates at $\frac{p-1}2$.  By Lemma \ref{cancelation}, 
\begin{equation}\label{ALHSp^4}
A(p) \equiv \pFq{6}{5}{\frac54, \frac12, \frac12, \frac12, \frac12, \frac12}{,\frac14, 1, 1, 1, 1}{-1}_{\frac{p-1}2} \pmod{p^4}.
\end{equation}

Letting $c= \frac{1-ip}2$, $d=\frac{1+ip}2$, $e=\frac{1-p}2$, and $f=\frac{1+p}2$ in (\ref{6F5identity}), gives that
\begin{equation*}
A(p) = \frac{\G\left(1 + \frac{p}2 \right)\G\left(1 - \frac{p}2 \right)}{\G\left(\frac32 \right)\G\left( \frac12\right)} \cdot \pFq{3}{2}{\frac12, \frac{1-p}2, \frac{1+p}2}{,1 + \frac{ip}2, 1 - \frac{ip}2}{1}.
\end{equation*}
Note that by \eqref{functional} and \eqref{Greflection}, $\G\left(\frac32 \right)\G\left( \frac12\right)=\frac{\pi}2$.  Thus, letting $a=\frac{1-p}2$, $b=\frac{1+p}2$, $c = \frac12$, $e = 1 + \frac{ip}2$, and $f = 1 - \frac{ip}2$ in \eqref{AAR3.5.5}, we see that $a+b=1$ and $e+f=2c$, so we obtain
\begin{equation}\label{AGammas}
A(p) = \frac{2\cdot \G\left(1 + \frac{p}2 \right) \G\left(1 - \frac{p}2  \right) \G\left(1 + \frac{ip}2  \right) \G\left( 1 - \frac{ip}2 \right)}{\G\left(\frac{3-p-ip}4 \right) \G\left(\frac{3-p+ip}4 \right) \G\left(\frac{3+p-ip}4 \right) \G\left(\frac{3+p+ip}4 \right)}.
\end{equation}
We will show in \eqref{Afinalcong} that this gives the right hand side target from Theorem \ref{A} modulo $p^5$.

In the meantime, as in Sections \ref{E2F2} and \ref{G2}, we consider the series obtained from $A(p)$ by changing $p$ to an indeterminate $x$ and truncating at $\frac{p-1}2$,
\[
A(x) = \pFq{6}{5}{\frac54, \frac12, \frac{1-ix}2, \frac{1+ix}2, \frac{1-x}2, \frac{1+x}2}{,\frac14, 1+\frac{ix}2, 1-\frac{ix}2, 1+\frac{x}2, 1-\frac{x}2}{-1}_{\frac{p-1}2}.
\]
By Lemma \ref{cancelation}, $A(x)\in\Z_p[[x^4]]$, and so we have $A(x)= C_0 + C_4x^4 + C_8x^8 + \cdots$, for $C_i\in\Z_p$ where $C_0$ is our left hand side target \eqref{ALHS}.  Thus if $p\mid C_4$, then letting $x=p$ gives the desired congruence $A(p) \equiv C_0 \pmod{p^5}$.  

Considering instead 
\[
A'(x) = \pFq{6}{5}{\frac{5-p}4, \frac{1-p}2, \frac{1-ix}2, \frac{1+ix}2, \frac{1-x}2, \frac{1+x}2}{,\frac{1-p}4, 1+\frac{ix}2, 1-\frac{ix}2, 1+\frac{x}2, 1-\frac{x}2}{-1},
\]
we see that $A'(x)$ naturally truncates at $\frac{p-1}2$, so is actually a rational function in $x^4$ in $\Z_p[[x^4]]$.  Modulo $p$, $A(x)$ and $A'(z)$ have the same coefficients in $\Z_p[[x^4]]$.  However, by \eqref{6F5identity}, we see that 
\[
A'(x) = \frac{\G(1 + \frac{x}2)\G(1 - \frac{x}2)}{\G(\frac{3-p}2)\G(\frac{1-p}2)} \cdot \pFq{3}{2}{\frac{1-p}2, \frac{1-x}2, \frac{1+x}2}{, 1 + \frac{ix}2, 1 - \frac{ix}2}{1},
\]
where the ${}_3F_2$ series naturally truncates and is a rational function in $x^2$ and in $\Z_p[[x^2]]$.  The $\G$-factor in front however gives that $A'(x)=0$, since $\frac{3-p}2, \frac{1-p}2$ are negative integers.  Thus since modulo $p$, $A(x)$ and $A'(z)$ have the same coefficients in $\Z_p[[x^4]]$ we must have that $p \mid C_4$ (in fact all of the $C_{i}$), and so
\[
A(p) \equiv \pFq{6}{5}{\frac54, \frac12, \frac12, \frac12, \frac12, \frac12}{,\frac14, 1, 1, 1, 1}{-1}_{\frac{p-1}2} \pmod{p^5}.
\]

Fix $k=\frac{p-1}2$.  We now show that $A(p)$ is congruent to the right hand target modulo $p^5$.   Starting from \eqref{AGammas}, we first observe that \eqref{Gquotient} together with \eqref{Greflection} yields that
\[
\G\left(1 + \frac{p}2\right)\G\left(1 + \frac{p}2\right) = \frac{\pi}2 \cdot \frac{(\frac32)_k}{(1 - \frac{p}2)_k}.
\]
Furthermore, by \eqref{Gduplication} we have that
\[
\G\left(1 + \frac{ip}2\right)\G\left(1 - \frac{ip}2\right) = \frac{1}{\pi} \G\left( \frac12 + \frac{ip}4\right)\G\left( \frac12 - \frac{ip}4\right)\G\left( 1 + \frac{ip}4\right)\G\left( 1 - \frac{ip}4\right).
\]
Using \eqref{Gquotient}, we can thus rewrite $A(p)$ from \eqref{AGammas} as
\[
A(p) = \frac{(\frac32)_k (\frac{3-p+ip}{4})_k (\frac{3-p-ip}{4})_k}{(1 - \frac{p}2)_k (1 + \frac{ip}4)_k (1 - \frac{ip}4)_k}.
\]
We now use Lemma \ref{Gpquotient} for $\alpha\in\{\frac32, 1 -\frac{p}2, 1 + \frac{ip}{4}, 1 - \frac{ip}{4}, \frac{3-p+ip}{4}, \frac{3-p-ip}{4} \}$ to analyze the factors in terms of $\G_p$.  Note that since $p\equiv 1\pmod{4}$ we have $i\in\Z_p$.  When $\alpha = \frac32$, the factors $\alpha, \alpha+1,\ldots, \alpha + k - 2$ are not in $p\Z_p$, but $\alpha+k-1 = \frac{p}2$.  When  $\alpha = 1 -\frac{p}2$, none of the factors $\alpha+j$ are in $p\Z_p$.  Thus with \eqref{Gpfunctional} and \eqref{Gpreflection} we have,
\begin{equation}\label{firstA}
\frac{(\frac32)_k}{(1 - \frac{p}2)_k} = \frac{p}2 \cdot \frac{ \G_p(1 + \frac{p}2)\G_p(1 - \frac{p}2)}{\G_p\left( \frac12\right)\G_p\left( \frac32\right)} = p\cdot \G_p\left(1 + \frac{p}2\right)\G_p\left(1 - \frac{p}2\right).
\end{equation}

When $\alpha=1 \pm \frac{ip}{4}$, we have that none of the factors $\alpha+j$ are in $p\Z_p$, and so
\begin{equation}\label{secondA}
\frac{1}{(1 + \frac{ip}4)_k (1 - \frac{ip}4)_k}  = \frac{\G_p(1 + \frac{ip}4) \G_p(1 - \frac{ip}4)}{\G_p(\frac34 + \frac{(1+i)p}4)\G_p(\frac34 + \frac{(1-i)p}4)}.
\end{equation}

Similarly when $\alpha=\frac{3-p\pm ip}{4}$ none of the factors $\alpha+j$ are in $p\Z_p$, so
\begin{equation}\label{thirdA}
\left(\frac{3-p+ip}{4}\right)_k \left(\frac{3-p-ip}{4}\right)_k  = \frac{\G_p(\frac12 + \frac{ip}4)\G_p(\frac12 - \frac{ip}4)}{\G_p(\frac34 + \frac{(-1+i)p}4)\G_p(\frac34 + \frac{(-1-i)p}4)}.
\end{equation}

Together \eqref{firstA}, \eqref{secondA}, and \eqref{thirdA} give that
\[
A(p) = \frac{p \G_p(1 + \frac{p}2)\G_p(1 - \frac{p}2) \G_p(1 + \frac{ip}4) \G_p(1 - \frac{ip}4) \G_p(\frac12 + \frac{ip}4)\G_p(\frac12 - \frac{ip}4)}{\G_p(\frac34 + \frac{(1+i)p}4)\G_p(\frac34 + \frac{(1-i)p}4) \G_p(\frac34 + \frac{(-1+i)p}4) \G_p(\frac34 + \frac{(-1-i)p}4)}.
\]
Using the discussion in Remark \ref{LRext} we can analyze this quotient modulo $p^4$.  First, note that 
\[
\G_p\left(1 + \frac{p}2\right)\G_p\left(1 - \frac{p}2\right) \equiv \G_p(1)^2\left[1 + \frac14G_2(1)p^2 -\frac14 G_1(1)^2p^2 \right] \equiv 1 \pmod{p^4},
\]
using that $G_1(1)^2=G_2(1)$ by \eqref{Gfacts}.  Similarly,
\[ 
\G_p\left(1 + \frac{ip}4\right) \G_p\left(1 - \frac{ip}4\right) \equiv\G_p(1)^2\left[1 -\frac1{16}G_2(1)p^2 +\frac1{16} G_1(1)^2p^2 \right] \equiv 1 \pmod{p^4}.
\]
Also, since $\G_p(1/2)^2 = -1$ by \eqref{Gpreflection} we have
\[ 
\G_p\left(\frac12 + \frac{ip}4\right) \G_p\left(\frac12 - \frac{ip}4\right) \equiv\G_p\left(\frac12\right)^2\left[1 -\frac1{16}G_2\left(\frac12\right)p^2 +\frac1{16} G_1\left(\frac12\right)^2p^2 \right] \equiv -1 \pmod{p^4}.
\]
Using the same technique on the denominator we see that
\begin{align*}
\G_p\left(\frac34 + \frac{(i+1)p}4\right) \G_p\left(\frac34 - \frac{(i+1)p}4\right) & \equiv \G_p\left(\frac34\right)^2 \left[ 1 - \frac{i}{8} \left(G_1\left(\frac34\right)^2 - G_2\left( \frac34\right) \right) p^2\right] \pmod{p^4} \\
\G_p\left(\frac34 + \frac{(i-1)p}4\right) \G_p\left(\frac34 - \frac{(i-1)p}4\right) & \equiv \G_p\left(\frac34\right)^2 \left[ 1 + \frac{i}{8} \left(G_1\left(\frac34\right)^2 - G_2\left( \frac34\right) \right) p^2\right] \pmod{p^4} \\
\end{align*}
and so
\[
\G_p\left(\frac34 + \frac{(1+i)p}4\right)\G_p\left(\frac34 + \frac{(1-i)p}4\right) \G_p\left(\frac34 + \frac{(-1+i)p}4\right) \G_p\left(\frac34 + \frac{(-1-i)p}4\right) \equiv \G_p\left(\frac34\right)^4 \pmod{p^4}.
\]
Putting this together, the factor of $p$ in front gives the desired congruence modulo $p^5$
\begin{equation}\label{Afinalcong}
A(p) \equiv \frac{-p}{\G_p\left(\frac34 \right)^4} = -p\cdot \G_p\left( \frac14\right)^4 \pmod{p^5}.
\end{equation}
\end{proof}


\section{Conjectures}\label{conjectures}

The following more general van Hamme type congruence conjectures are supported by computational evidence computed with Ling Long and Hao Chen using Sage.  Note that some of these conjectures extend van Hamme's conjectures in the $r=1$ case, which motivated several of the theorems in this paper.  

\begin{itemize}
\item[(A.3)]
\[
\left \{
\begin{array}{lll} 
S(\frac{p^r-1}2) \equiv -p\Gamma_p(\frac14)^4 S(\frac{p^{r-1}-1}2)&\pmod{p^{5r}} & p\equiv 1\pmod4, \; r\ge 1\\
\\
S(\frac{p-1}2) \equiv 0 &\pmod{p^{3}} &p\equiv 3\pmod4\\
\\
S(\frac{p^r-1}2) \equiv p^4 S(\frac{p^{r-2}-1}2)&\pmod{p^{5r-2}} &p\equiv 3\pmod4, \; r\ge 2\\
\end{array} \right.
\]

\item[(B.3)]
\[
\left \{\begin{array}{lll} 
S(\frac{p^r-1}2)=-p\Gamma_p(\frac12)^2 S(\frac{p^{r-1}-1}2)&\mod p^{3r}&p\equiv 1\pmod4\\ 
\\
S(\frac{p-1}2)=-p\Gamma_p(\frac12)^2  &\mod p^{3} & p\equiv 3\pmod 4\\ 
\\
S(\frac{p^r-1}2)=p^2 S(\frac{p^{r-2}-1}2)&\mod p^{3r-2}&p\equiv 3\pmod 4, \; r\ge 2 \\
\end{array} \right.
\]

\item[(C.3)] 
\[
S\left(\frac{p^r-1}2\right) \equiv p S\left(\frac{p^{r-1}-1}2\right) \pmod{p^{4r}}
\]

\item[(D.3)]
\[
\left \{\begin{array}{lll} 
S(\frac{p^r-1}3) \equiv -p{\Gamma_p(\frac13)^9} S(\frac{p^{r-1}-1}3)&\pmod{p^{6r}}&p\equiv 1\pmod 3\\ 
\\
S(\frac{p^2-1}3) \equiv 0  &\pmod{p^{4}} &p\equiv 2\pmod 3\\ 
\\
S(\frac{p^r-1}3) \equiv p^4 S(\frac{p^{r-2}-1}3)&\pmod{p^{2r+1}} &p\equiv 2\pmod 3, \; r\ge 4 \text{ even}\\ 
\\
S(\frac{p^r-2}3)=-p^5 S(\frac{p^{r-2}-2}3)&\pmod{p^{2r}}&p\equiv 2\pmod 3, \; r\ge 3 \text{ odd}\\
\end{array} \right.
\]

\item[(E.3)]
\[
\left \{\begin{array}{lll} 
S(\frac{p^r-1}3) \equiv pS(\frac{p^{r-1}-1}3)&\pmod{p^{3r}}&p\equiv 1\pmod 3\\ 
\\
S(\frac{p^r-1}3) \equiv p^2 S(\frac{p^{r-2}-1}3)&\pmod{p^{3r-2}}&p\equiv 2\pmod 3, \; r\ge 2 \text{ even}\\ 
\\
S(\frac{p-2}3)\equiv 0 &\pmod{p} & p\equiv 2\pmod 3 \\
\\
S(\frac{p^r-2}3)\equiv p^2 S(\frac{p^{r-2}-2}3)&\pmod{p^{3r-1}} &p\equiv 2\pmod 3, \;r \ge  3 \text{ odd}\\
\end{array} \right.
\]

\item[(F.3)]
\[
\left \{\begin{array}{lll} 
S(\frac{p^r-1}4) \equiv (-1)^{\frac{p^2-1}8}pS(\frac{p^{r-1}-1}4)&\pmod{p^{3r}}&p\equiv 1\pmod 4\\ 
\\
S(\frac{p^r-1}4) \equiv p^2 S(\frac{p^{r-2}-1}4)&\pmod{p^{3r-2}} &p\equiv 3\pmod 4,\; r\ge 2 \text{ even}\\ 
\\
S(\frac{p^r-3}4)=p^2 S(\frac{p^{r-2}-3}4)&\mod p^{r}&p\equiv 3\pmod 4, \; r\ge  3 \text{ odd}\\
\end{array} \right.
\]

\item[(G.3)]
\[
\left \{\begin{array}{lll} 
S(\frac{p^r-1}4) \equiv -(-1)^{\frac{p^2-1}8}p\Gamma(\frac12)\Gamma_p(\frac14)^2S(\frac{p^{r-1}-1}4)&\pmod{p^{4r}} &p\equiv 1\pmod 4\\ 
\\
S(\frac{p^r-1}4)\equiv -p^3 S(\frac{p^{r-2}-1}4)&\pmod{p^{4r-2}} &p\equiv 3\pmod 4,\; r\ge 2 \text{ even}\\
\\
S(\frac{p^r-3}4) \equiv -p^3 S(\frac{p^{r-2}-3}4)&\pmod{p^{r+1}}&p\equiv 3\pmod4, \; r\ge 3 \text{ odd}\\
\end{array} \right.
\]

\item[(H.3)]
\[
\left \{\begin{array}{lll} 
S(\frac{p^r-1}2) \equiv -{\Gamma_p(\frac14)^4} S(\frac{p^{r-1}-1}2)&\pmod{p^{3r}} &p\equiv 1\pmod 4\\ 
\\
S(\frac{p-1}2)\equiv 0 &\pmod{p^2} & p\equiv 3\pmod 4\\
\\
S(\frac{p^r-1}2)\equiv p^2 S(\frac{p^{r-2}-1}2)&\pmod{p^{3r-1}} &p\equiv 3\pmod4,\; r\ge 2
\end{array} \right.
\]

\item[(I.3)] 
\[
S\left(\frac{p^r-1}2\right)=2p^{2r} \pmod{p^{2r+1}}
\]

\item[(J.3)] 
\[
S\left(\frac{p^r-1}2\right) \equiv (-1)^{\frac{p-1}2}pS\left(\frac{p^{r-1}-1}2\right) \pmod{p^{4r}}
\]

\item[(K.3)] 
For $p>5$,
\[
\left \{\begin{array}{lll} 
S\left(\frac{p-1}2\right) \equiv -5(-1)^{\frac{p-1}2}p &\pmod{p^4}&\\
\\
S\left(\frac{p^r-1}2\right) \equiv -(-1)^{\frac{p-1}2}pS\left(\frac{p^{r-1}-1}2\right) &\pmod{p^{4r}}&
\end{array} \right.
\] 

\item[(L.3)] 
\[
S\left(\frac{p^r-1}2\right) \equiv (-1)^{\frac{p-1}2}(-1)^{\frac{p^2-1}8}pS\left(\frac{p^{r-1}-1}2\right) \equiv \left(\frac{-2}p\right)pS\left(\frac{p^{r-1}-1}2\right) \pmod{p^{3r}}. 
\] 

\end{itemize}

\section{Acknowledgements}The author would like to thank Ling Long for numerous helpful conversations which provided the ideas behind this work, and Robert Osburn for many inspiring conversations and for introducing her to the van Hamme conjectures.  The author also thanks Tulane University for hosting her while working on this project, and Sage Days 56, where she ran computations related to this project with Ling Long and Hao Chen. 

\bibliography{vHbib}
\bibliographystyle{plain}

\end{document}